\documentclass[12pt]{amsart}
\usepackage{amsmath}
\usepackage{amssymb}
\usepackage{graphicx}
\usepackage{pinlabel}
\usepackage[all]{xy}

\addtolength{\oddsidemargin}{-.875in}
	\addtolength{\evensidemargin}{-.875in}
	\addtolength{\textwidth}{1.75in}

	\addtolength{\topmargin}{-.875in}
	\addtolength{\textheight}{1.75in}

\usepackage{amsmath,amssymb,amsfonts,amsthm,graphicx}

\usepackage{pinlabel}

\usepackage[usenames,dvipsnames]{color}

\usepackage[usenames,dvipsnames]{color}

\newtheorem{dummy}{dummy}[section]
\newtheorem{lemma}[dummy]{Lemma}

\newtheorem{corollary}[dummy]{Corollary}
\newtheorem{proposition}[dummy]{Proposition}

\theoremstyle{definition}

\newtheorem{remark}[dummy]{Remark}

\newtheorem{question}[dummy]{Question}

\numberwithin{equation}{section}

\newcommand{\R}{\mathbb {R}}

\newcommand{\Z}{\mathbb {Z}}
\newcommand{\F}{\mathbb {F}}

\newcommand{\e}{\epsilon}

\newcommand{\Reeb}{\mathcal{R}}

\newcommand{\Char}{\mbox{char}\,}
\newcommand{\Loc}{\mathit{Aug}(L, \mathbb{F})}

\begin{document}

\title[Non-fillable augmentations of twist knots]{Non-fillable augmentations of twist knots}

\author{Honghao Gao and Dan Rutherford}

\begin{abstract}  We establish new examples of augmentations of Legendrian twist knots that cannot be induced by orientable Lagrangian fillings.  To do so, we use a version of the Seidel-Ekholm-Dimitroglou Rizell isomorphism with local coefficients to  show that any Lagrangian filling point in the augmentation variety of a Legendrian knot must lie in the injective image of an algebraic torus with dimension equal to the first Betti number of the filling.  This is a Floer-theoretic version of a result from microlocal sheaf theory.  For the augmentations in question, we show that no such algebraic torus can exist.
\end{abstract}

\maketitle

\section{Introduction}  Let $\Lambda \subset \R^3$ be a Legendrian knot in $\R^3$ with its standard contact structure.  A challenging geometric problem is to classify the possible Lagrangian fillings of $\Lambda$.  An exact Lagrangian filling $L$ of $\Lambda$ in the symplectization $\mathit{Symp}(\R^3) = \R\times \R^3$ with vanishing Maslov number, $m(L) =0$, induces an algebraic object: an augmentation of the Legendrian contact DGA (differential graded algebra) of $\Lambda$, i.e. a DGA homomorphism 
\[
\epsilon:\mathcal{A}(\Lambda) \rightarrow \mathbb{F}_2.
\]
See \cite{EHK}.  Augmentations to more general fields, $\mathbb{F}$, can be induced by equipping $L$ with some additional data: a spin structure (when $\Char \mathbb{F}\neq 2$), and a rank $1$ local system over $\mathbb{F}$, i.e. a group homomorphism $\rho:\pi_1(L,x_0) \rightarrow \mathbb{F}^*$.  It is natural to ask how closely the algebra reflects the geometry.

\medskip

\begin{question} \label{question:1} Given a field $\mathbb{F}$, which augmentations of $\mathcal{A}(\Lambda)$ to $\mathbb{F}$ can be induced by Lagrangian fillings?
\end{question}

\medskip

Many augmentations cannot be induced by any embedded Lagrangian filling as can be seen by the following obstructions:

\begin{enumerate}
\item A Lagrangian filling must satisfy $\mathit{tb}(\Lambda) = 2g(L)-1$ where $\mathit{tb}(\Lambda)$ is the Thurston-Bennequin number of $\Lambda$.  See \cite{Chan}. 
\item The linearized cohomology of an augmentation associated to a filling  $L$ must be isomorphic (after shifting grading up by $1$) to the relative cohomology $H^*(L, \Lambda)$.  See \cite{Ekh, DR}. 
\item More recently, the work of Ekholm-Lekili \cite{EL} upgrades (a variant of) the above isomorphism to include an $A_\infty$-algebra structure, and Etg\"{u} \cite{Etgu} provides examples of augmentations where the $A_\infty$-algebra structure on linearized cohomology obstructs fillings in a somewhat subtle way.

\end{enumerate}
In this article, we use the local structure of the augmentation variety to obstruct Lagrangian fillings, and provide examples of augmentations of negative twist knots for which the obstruction applies while (1)-(3) do not.  In fact, we provide examples of non-fillable augmentations for any maximal Thurston-Bennequin number negative twist knot with odd crossing number $\geq 5$.

For a specific case, consider the family of Legendrian twist knots, $\Lambda_n$ where $n\geq 3$ is odd, pictured in Figure \ref{fig:Lambda}.  Eg., $\Lambda_3$ is one of the famous Chekanov-Eliashberg $m(5_2)$ knots; see \cite{ENV} for a complete classification of Legendrian twist knots.  Each $\Lambda_n$ has three augmentations to $\mathbb{F}_2$ given by
\[
\epsilon_1: \left(\begin{array}{ccc} a & \mapsto & 0 \\ b, c_1, \ldots, c_n & \mapsto & 1 \end{array} \right),    \quad  \epsilon_2:   \left(\begin{array}{ccc} b & \mapsto & 0 \\ a, c_1, \ldots, c_n & \mapsto & 1 \end{array} \right), \quad \epsilon_3: \left(\begin{array}{ccc} a, b & \mapsto & 0 \\  c_1, \ldots, c_n & \mapsto & 1 \end{array}\right).
\]
While both of the augmentations $\epsilon_1$ and $\epsilon_2$ can be induced by an embedded Lagrangian filling with $m(L)=0$, it is shown in Corollary \ref{cor:main} that $\epsilon_3$ cannot be.

\begin{figure}[!ht]
\labellist
\small
\pinlabel $e_0$ [l] at 222 226
\pinlabel $e_1$ [l] at 222 174
\pinlabel $e_2$ [l] at 222 138
\pinlabel $e_3$ [l] at 222 100
\pinlabel $e_4$ [l] at 222 54
\pinlabel $e_5$ [l] at 70 104
\pinlabel $e_6$ [l] at 70 144
\pinlabel $e_7$ [l] at 70 180
\pinlabel $c_1$ [l] at 196 154
\pinlabel $c_2$ [l] at 196 116
\pinlabel $c_3$ [l] at 196 76
\pinlabel $c_4$ [l] at 42 86
\pinlabel $c_5$ [l] at 42 124
\pinlabel $c_6$ [l] at 42 162
\pinlabel $c_7$ [l] at 50 198
\pinlabel $a$ [b] at 158 194
\pinlabel $b$ [b] at 196 202

\endlabellist
\includegraphics[scale=.8]{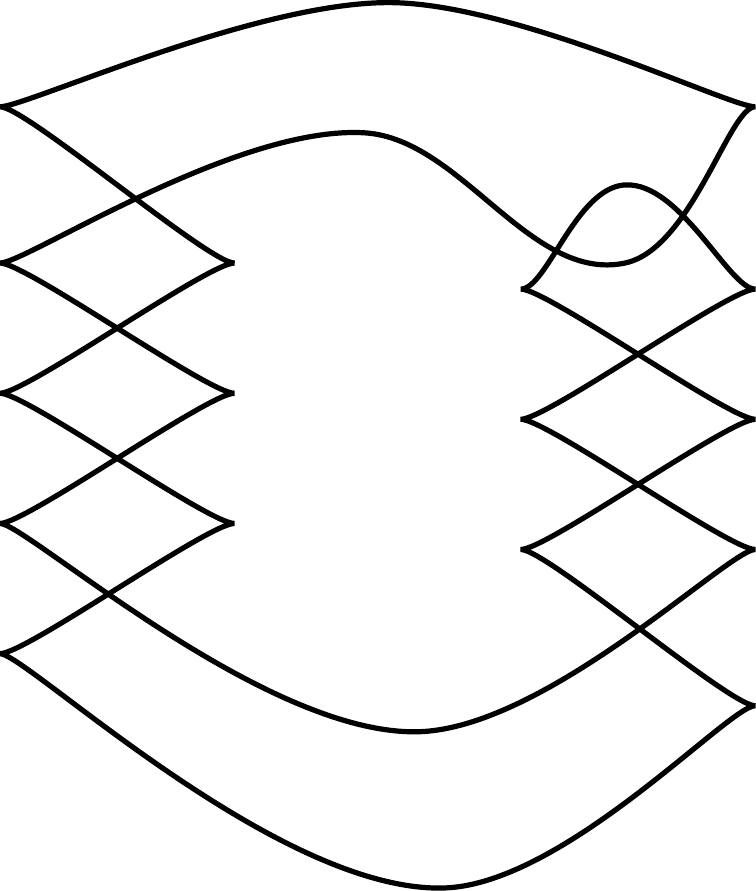}
\vspace{0.1in}
\caption{The Legendrian knot $\Lambda_n$, pictured in the front projection with $n =7$.  For $n = 2k+1 \geq 3$, crossings $a$, $b$ and $c_1, \ldots, c_k$ appear on the right, while crossings $c_{k+1}, \ldots, c_{n}$ appear on the left.  The right cusps are labeled in clockwise order starting at the upper right as $e_0, e_1, \ldots, e_{k+1}$ (appearing on the right) and $e_{k+2}, \ldots, e_n$ (appearing on the left).}
\label{fig:Lambda}
\end{figure}

To more precisely describe the obstructions that we will apply, let $\mathit{Aug}_m(\Lambda, \mathbb{F})$ denote the $m$-graded augmentation variety of $\Lambda$.  It is an affine algebraic variety, i.e. an algebraic set in $\mathbb{F}^N$; see Section \ref{sec:Aug}.  In analogy with a result of Jin and Treumann from microlocal sheaf theory \cite{JT}, see also \cite{NadlerZ, Gui} and \cite[Section 2.3-2.4]{STWZ}, 
we have that: 
\begin{proposition} \label{prop:1} Assume $L \subset \mathit{Symp}(\R^3)$ is an exact orientable Lagrangian filling of $\Lambda \subset \R^3$ with Maslov number $m(L) =m$.  
If $\epsilon \in \mathit{Aug}_m(\Lambda, \mathbb{F})$ is induced by $L$ (via a choice of spin structure and rank $1$ local system, $\rho:\pi_1(L,x_0) \rightarrow \mathbb{F}^*$), then $\epsilon$ lies in the image of an injective, algebraic map 
\[
f_L^*: \Loc \cong (\mathbb{F}^*)^{\dim(H_1(L))} \hookrightarrow \mathit{Aug}_m(\Lambda, \mathbb{F}).
\]
Moreover, if $\mathit{Aug}_m(\Lambda, \mathbb{F})/{\sim}$ denotes the set of DGA homotopy classes of augmentations, then the composition 
\[
(\mathbb{F}^*)^{\dim(H_1(L))} \stackrel{f^*_L}{\hookrightarrow} \mathit{Aug}_m(\Lambda, \mathbb{F}) \rightarrow \mathit{Aug}_m(\Lambda, \mathbb{F})/{\sim}
\] 
is also injective.
\end{proposition}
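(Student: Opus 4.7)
The plan is to build $f_L^*$ from an explicit moduli-theoretic count weighted by $\rho$, deduce algebraicity by a monomial bookkeeping argument, and establish injectivity up to DGA homotopy using the Seidel--Ekholm--Dimitroglou Rizell (SEDR) isomorphism with local coefficients. Since injectivity of the composition through $\mathit{Aug}_m(\Lambda, \mathbb{F})/{\sim}$ implies injectivity of $f_L^*$, proving the stronger statement gives both claims at once.

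After fixing a generic admissible almost-complex structure $J$, a spin structure on $L$, and a basepoint $x_0 \in L$, for each local system $\rho \colon \pi_1(L, x_0) \to \mathbb{F}^*$ and each Reeb chord $c$ of $\Lambda$ I would set
\[
f_L^*(\rho)(c) := \sum_{u} \sigma(u)\, \rho([\partial u]),
\]
where the sum ranges over the $0$-dimensional component of the moduli space of $J$-holomorphic disks in $\mathit{Symp}(\R^3)$ with boundary on $L$ and a single positive puncture at $c$, the sign $\sigma(u) \in \{\pm 1\}$ is determined by the spin structure, and $[\partial u]$ is the class of the boundary of $u$ closed up by fixed capping paths to $x_0$. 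Since $L$ is a compact orientable surface with boundary, $H_1(L)$ is free of rank $\dim H_1(L)$, and since $\mathbb{F}^*$ is abelian, $\rho$ factors through $H_1(L)$; hence $\mathit{Aug}(L, \mathbb{F}) \cong (\mathbb{F}^*)^{\dim H_1(L)}$. In the resulting torus coordinates, each $\rho([\partial u])$ is a Laurent monomial, so $f_L^*(\rho)(c)$ is a Laurent polynomial in $\rho$ and $f_L^*$ is algebraic. The twisted $\partial^2 = 0$ argument on the boundary of the $1$-dimensional moduli component---in which the identity $\rho([\partial u_1]) \rho([\partial u_2]) = \rho([\partial(u_1 \# u_2)])$ holds at each broken configuration---shows that $f_L^*(\rho)$ is an augmentation, and taking $J$ cylindrical at infinity identifies it with the augmentation induced by $(L, \rho)$.

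For injectivity modulo DGA homotopy, I would invoke the SEDR isomorphism with local coefficients: for $\epsilon = f_L^*(\rho)$, the linearized cochain complex of $\mathcal{A}(\Lambda)$, suitably twisted by $\rho$, is identified in a DGA-homotopy-invariant way with the twisted relative Morse complex of $(L, \Lambda; \rho)$. To recover $\rho$ itself, I would parametrize this construction: as $\rho$ varies over the torus $(\mathbb{F}^*)^{\dim H_1(L)}$, the family of twisted linearized complexes forms a local system over the torus whose monodromy around the loop dual to a class $\gamma \in H_1(L)$ is given by multiplication by the coordinate function $\rho \mapsto \rho(\gamma)$. Since this monodromy is a DGA-homotopy invariant of $\epsilon$, it reconstructs $\rho$ on the nose, forcing $\rho_1 \neq \rho_2$ to yield $f_L^*(\rho_1) \not\sim f_L^*(\rho_2)$. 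The main obstacle is setting up the twisted SEDR isomorphism carefully enough that the basepoint and capping-path bookkeeping at broken configurations is coherent and that the reconstructed invariant is preserved by DGA homotopy, rather than merely by chain-level quasi-isomorphism; this should follow the template of the untwisted SEDR argument combined with the twisted EHK-style setup underlying~\cite{EL}.
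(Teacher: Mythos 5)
Your construction of $f_L^*$ and the algebraicity argument are fine and agree with the paper (there this is dispatched in one line, since $\epsilon_{(L,\rho)}=\rho\circ f_L$ and $f_L$ is a ring homomorphism). The gap is in the injectivity argument. The object you propose to use --- the ``monodromy'' of the family of $\rho$-twisted linearized complexes as $\rho$ varies over $(\mathbb{F}^*)^{\dim H_1(L)}$ --- is a structure attached to the filling $L$ and the parametrization of the family, not to a single augmentation $\epsilon$; over a general field there is not even a topological torus whose loops could carry monodromy, and no algebraic substitute is formulated. More seriously, the assertion that this monodromy ``is a DGA-homotopy invariant of $\epsilon$'' and ``reconstructs $\rho$ on the nose'' is exactly the statement to be proved and is given no mechanism: twisting the linearized complex of $\epsilon$ by $\rho$ requires the data $\rho$ on $L$ as input, so nothing in your setup produces, from the DGA homotopy class of $\epsilon$ alone, a quantity that remembers $\rho$. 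You flag this yourself in your final sentence, which is precisely where the proof is missing.

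The paper's route avoids this circularity by exploiting the two-variable (bilinearized) structure. One takes two local systems $\rho_1,\rho_2$ and considers the hom-complex $\mathit{Hom}_+(\epsilon_{(L,\rho_1)},\epsilon_{(L,\rho_2)})$ in the positive augmentation category, which is intrinsic to the pair of augmentations of $\Lambda$. The SEDR isomorphism with local coefficients (via acyclicity of the Cthulhu-type complex $CF^*(L,L^+;\pi_1)$ and tensoring with $\rho_1,\rho_2$) identifies its cohomology with the twisted Morse cohomology $H^*(L,h;\rho_1^t\otimes\rho_2)$, and the elementary computation $H^0(L,h;\rho)\cong\mathbb{F}$ iff $\rho$ is trivial detects whether $\rho_1=\rho_2$. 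Distinctness of augmentations follows by comparing $H^*\mathit{Hom}_+(\epsilon_{(L,\rho_1)},\epsilon_{(L,\rho_1)})$ with $H^*\mathit{Hom}_+(\epsilon_{(L,\rho_1)},\epsilon_{(L,\rho_2)})$, and the statement up to DGA homotopy follows because DGA homotopic augmentations are isomorphic objects of $\mathrm{Aug}_+(\Lambda,\mathbb{F})$ (\cite[Proposition 5.19]{NRSSZ}), so hom-cohomology against the fixed test object $\epsilon_{(L,\rho_1)}$ cannot distinguish them. Note also that for $m(L)=m$ even but nonzero the isomorphism only preserves grading mod $m$, so $H^0$ must be replaced by $H^{\mathit{even}}$; the paper handles this by using $\dim L=2$ and a Morse function $h$ with only index $0$ and $1$ critical points, a point your proposal does not address. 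If you replace your monodromy step by this bilinearized comparison, the rest of your outline goes through.
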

A more general statement that applies in all dimensions appears as Proposition \ref{prop:main2}.  The proof 
involves observing an extension of the Seidel-Ekholm-Dimitroglou Rizell isomorphism, see \cite{Ekh,DR}, to Lagrangian fillings equipped with local coefficient systems.

To obstruct fillings inducing the specific augmentation, $\epsilon_3$, to $\mathbb{F}_2$ mentioned above,
we consider the corresponding point in the variety $\mathit{Aug}(\Lambda, \overline{\mathbb{F}}_2)$ over the algebraic closure of $\mathbb{F}_2$, and show that it cannot be in the injective image of such an algebraic torus.  See Corollary \ref{cor:main},
which also applies the obstruction to  
augmentations similar to $\e_3$ but defined over arbitrary fields.  Moreover, using the Etnyre-Ng-Vertesi classification from \cite{ENV} we generalize this example to produce non-fillable augmentations of arbitrary max-$\mathit{tb}$ Legendrian twist knots with the same topological type as the $\Lambda_n$. 

\begin{remark}
In \cite{NRSSZ} a quasi-equivalence of $A_\infty$-categories is established between an $A_\infty$-category, $\mbox{Aug}_+(\Lambda)$, whose objects are augmentations and a dg derived category of microlocal rank $1$ constructible sheaves with singular support specified by $\Lambda$.  The sheaf categories were introduced into the Legendrian knot theory in \cite{STZ}. In particular, on isomorphism classes of objects we get a bijection between $\mathit{Aug}(\Lambda, \mathbb{F})/{\sim}$ and the corresponding moduli space of sheaves, so that one could apply \cite{NRSSZ} together with \cite[Section 2.4]{STWZ} to deduce a statement about the local structure of the moduli space $\mathit{Aug}(\Lambda, \mathbb{F})/{\sim}$ near a Lagrangian filling, similar to Proposition \ref{prop:1}.  In this article, we establish Proposition \ref{prop:1} via a purely Floer-theoretic proof independent of \cite{NRSSZ}, though the approach, showing that there is a quasi-isomorphism between hom-spaces in $\mbox{Aug}_+(\Lambda)$ and in the category of local systems on $L$, is very much inspired by the outline on the sheaf side.  
The Floer-theoretic proof provides a uniform treatment of the case when $m(L)=m$ is even but not necessarily $0$ that is important for obstructing arbitrary orientable fillings, and also allows for the statement about the naive augmentation variety (not quotienting by DGA homotopy).
\end{remark}

Complementing the obstructions discussed above, there has also been much work involving constructions of Lagrangian fillings for different classes of Legendrian knots, sometimes realizing prescribed augmentation sets. We refer the interested reader to eg. \cite{EHK, BST, STWZ, HSab, LSab, CET, CG,  GSW, CN, ABL, Hughes}.  In another direction, if one allows Lagrangian fillings to have double points, then it is shown in \cite{PanRu2} that any augmentation to $\mathbb{F}_2$ can be induced in some appropriate manner by an immersed exact Lagrangian filling.

The contents of the remainder of the article are as follows:  Section \ref{sec:2} reviews the Legendrian contact DGA and augmentation varieties and provides a more general statement (Proposition \ref{prop:main2}) implying Proposition \ref{prop:1}.  Section \ref{sec:3} establishes a version of the Seidel-Ekholm-Dimitroglou Rizell isomorphism with local coefficients (Proposition \ref{prop:Seidel}) and then proves Proposition \ref{prop:main2}.  Section \ref{sec:lambda} computes the augmentation varieties of the twist knots, $\Lambda_n$, (Proposition \ref{prop:variety}), and then establishes that $\epsilon_3$ cannot be induced by any Lagrangian filling via Proposition \ref{prop:V}.  In Section \ref{sec:formality} we compute the $A_\infty$ structure on the linearized cohomology of $\e_3$ and show that the obstruction (3) does not apply. Section \ref{sec:general} considers the case of arbitrary negative Legendrian twist knots.

\subsection{Acknowledgements}  The first author thanks the hospitality of Ball State University during his visits in 2018 and 2019. The second author is grateful to AIM for hosting a SQuaREs meeting where the local structure of the augmentation variety near a Lagrangian filling, viewed from the perspective of micro-local sheaf theory, was discussed and to the participants Lenny Ng, Vivek Shende, Steven Sivek, David Treumann, and Eric Zaslow for generously sharing their knowledge.  Mohammed Abouzaid also formulated the possibility of using the local structure of the moduli space of augmentations to obstruct fillings during a problem session at a Workshop on Immersed Lagrangian Cobordisms at University of Ottawa sponsored by the Fields Institute.  Thanks to the organizers and participants for an interesting conference.  The first author is partially supported by an AMS-Simons travel grant. The second author is partially supported by grant 429536 from the Simons Foundation.

\section{Algebraic tori from Lagrangian fillings} \label{sec:2}

In this section we review the Legendrian contact DGA and augmentations varieties.  We describe in Proposition \ref{prop:main2} the subset of the augmentation variety induced by an exact Lagrangian filling.

\subsection{The Legendrian contact DGA} \label{sec:LCH}
We recall the Legendrian contact DGA with fully non-commutative $R[\pi_1(\Lambda,x_0)]$ and $R[H_1(\Lambda)]$-coefficients where {\it throughout $R= \mathbb{F}_2$ or $\mathbb{Z}$.}  

Let $\Lambda$ be a closed, connected $n$-dimensional Legendrian submanifold {\it with vanishing Maslov class}, i.e. with Maslov number $m(\Lambda) =0$, in\footnote{More generally, all results in Sections \ref{sec:2}-\ref{sec:3} hold for $\Lambda$ in $P\times \R$, the contactization of an exact symplectic manifold, as in the setting of \cite{EES07, CDGG}.}   
a $1$-jet space $J^1M$.  We use $(x,y,z) \in J^1M = T^*M \times \R$ for local coordinates arising from a coordinate $x \in M$.  With respect to the standard contact form, $dz-\sum_iy_i\,dx_i$, the Reeb vector field is $\frac{\partial}{\partial z}$.   The Reeb chords of $\Lambda$ are in bijection with double points of the Lagrangian  projection $\pi_{xy}:\Lambda \rightarrow T^*M$, and we write $\Reeb(\Lambda)$ for the set of Reeb chords of $\Lambda$.  
Choose a base point $x_0 \in \Lambda$, and for each $q \in \Reeb(\Lambda)$ choose {\bf base point paths}  
\[
\gamma_q^+, \gamma_q^-:[0,1] \rightarrow \Lambda \quad \mbox{with} \quad \gamma_q^\pm(0) = q^\pm, \, \gamma_q^\pm(1)=x_0.
\]
where $q^+, q^-$ are the upper and lower Reeb chord endpoints of $q$, i.e. $z(q^+)>z(q^-)$.  
Each $q \in \Reeb(\Lambda)$ is assigned an integer grading that is $1$ less than a certain Conley-Zhender index. See \cite{EES05a, EES07} for more detail.  

We denote by $\mathcal{A}(\Lambda)$ the {\bf Legendrian contact DGA} (differential graded algebra) of $\Lambda$ with fully non-commutative coefficients in the group algebra $R[\pi_1(\Lambda,x_0)]$.  It is the unital graded associative (but non-commutative) $R$-algebra generated by $\pi_1(\Lambda, x_0) \cup \Reeb(\Lambda)$ where the only relations are from $\pi_1(\Lambda, x_0)$ and that the identity element of $\pi_1(\Lambda,x_0)$ is the unit of $\mathcal{A}(\Lambda)$. The group algebra $R[\pi_1(\Lambda,x_0)]$ sits in graded degree $0$ as a sub-algebra of $\mathcal{A}(\Lambda)$. A basis for $\mathcal{A}(\Lambda)$ as an $R$-module consists of words of the form
\[
\alpha_0 q_1 \alpha_1 \cdots  q_n \alpha_n, \quad  \quad \alpha_0, \ldots, \alpha_n \in \pi_1(\Lambda), \quad q_1, \ldots, q_n \in \Reeb(\Lambda), \quad n \geq 0.
\]
In the case $R = \Z$, $\Lambda$ must be equipped with a choice of spin structure, $\xi$. When necessary we write $\mathcal{A}(\Lambda;\mathbb{F}_2[\pi_1])$ or $\mathcal{A}(\Lambda; \mathbb{Z}[\pi_1])$ to distinguish between the case of $R = \mathbb{F}_2$ or $\mathbb{Z}$.  

As in \cite{EES05a, EES07, EES05b, DR}, the differential can be defined by counting transversally cut out, $0$-dimensional (i.e., ``rigid'') moduli spaces of holomorphic disks in the symplectization $\mathit{Symp}(J^1M)= \R \times J^1M$ with boundary on $\R \times \Lambda$ and having boundary punctures that asymptotically approach Reeb chords at the positive and negative ends of $\mathit{Symp}(J^1M)$.  For more details and for a discussion of the class of almost complex structures allowed, consult the original sources, loc. cit.  In the Legendrian contact DGA only disks with a single positive puncture are used.  Consider such a holomorphic disk, 
\[
u:D^2 \setminus \{p_0,p_1, \ldots, p_n\} \rightarrow \R\times J^1M 
\]
where $p_0, \ldots, p_n$ appear in counter-clockwise order around $\partial D^2$, $p_0$ is a positive puncture at $a \in \Reeb(\Lambda)$, and $p_1, \ldots, p_n$ are negative punctures at $b_1, \ldots, b_n \in \Reeb(\Lambda)$. At the positive puncture, as $z\in \partial D^2$ approaches $p_0$ from the clockwise (resp. counter-clockwise) direction $u(z)$ projected to $\Lambda$  converges to $a^-$ (resp. $a^+$); whereas at negative punctures, $p_i$, the clockwise (resp. counter-clockwise) limit will be $b_i^+$ (resp. $b_i^-$). We associate elements
\[
\beta_0, \ldots, \beta_n \in \pi_1(\Lambda, x_0)
\]
to $u$ by concatenating with base point paths
\begin{align*}
\beta_0 &= (\gamma_a^+)^{-1} * u_{[p_0,p_1]}* \gamma_{b_1}^+, \\
\beta_i &=(\gamma_{b_i}^-)^{-1} * u_{[p_i,p_{i+1}]}* \gamma_{b_{i+1}}^+, \quad 1\leq i \leq n-1, \\
\beta_n &=(\gamma_{b_n}^-)^{-1} * u_{[p_n,p_0]}* \gamma_a^-,
\end{align*}
where we write here (and elsewhere) $u_{[p_i,p_{i+1}]}$ for the path on $\Lambda$ obtained from restricting $u$ to the counter-clockwise boundary arc from $p_i$ to $p_{i+1}$ and adding in the $1$-sided limits at Reeb chord endpoints. Writing 
\[
\mathbf{b} = \beta_0 b_1 \beta_1 \cdots b_n \beta_n,
\]
we let $\mathcal{M}(a; \mathbf{b})$ denote the moduli space of disks with boundary conditions specified as above by $a$ and the word $\mathbf{b}$, obtained from quotienting by translation in the $\R$ direction in $\mathit{Symp}(J^1M)$ and by any automorphisms of the domain.  See Figure \ref{fig:Disks}.  When $\Lambda$ is equipped with a spin structure, $\xi$, these moduli spaces are coherently oriented as in \cite{EES05b, EES07}.  The differential is defined on generators by
\[
\partial a = \sum_{\dim \mathcal{M}(a;\mathbf{b})=0} \#\mathcal{M}(a;\mathbf{b}) \cdot \mathbf{b}
\] 
where the sum is over all such rigid moduli spaces.  The coefficient $\#\mathcal{M}(a;\mathbf{b})$ is either a signed count when $R =\Z$ or a mod $2$ count when $R=\F_2$. With $\partial:\mathcal{A}(\Lambda) \rightarrow \mathcal{A}(\Lambda)$ extended to the full algebra as a degree $-1$ derivation, $(\mathcal{A}(\Lambda), \partial)$ is an (associative) DGA, whose stable tame isomorphism type is a Legendrian isotopy invariant of $(\Lambda, \xi)$.  

One can also consider the Legendrian contact DGA with fully non-commutative {\bf homology coefficients}, notated here as $\mathcal{A}(\Lambda;R[H_1(\Lambda)])$, that is obtained by specializing $R[\pi_1(\Lambda,x_0)] \rightarrow R[H_1(\Lambda)]$ using the canonical homomorphism $\pi_1(\Lambda, x_0) \rightarrow H_1(\Lambda) := H_1(\Lambda; \Z)$.  

\begin{remark} Homology coefficients have been used since the construction of Legendrian contact homology in \cite{EES05a}, though they do not appear in Chekanov's original combinatorial definition for knots in $\R^3$ from \cite{Ch} that is upgraded to use $\Z[H_1(\Lambda)]$ coefficients in \cite{ENS}.  
The $R[\pi_1(\Lambda, x_0)]$ coefficients are used in \cite{CDGG1, CDGG}.  See \cite{EN} for a recent survey on Legendrian contact homology.
\end{remark}

\begin{figure}[!ht]
\labellist
\tiny
\pinlabel $a$ [b] at 76 144
\pinlabel $\beta_0$ [b] at 30 94
\pinlabel $b_1$ [t] at 14 26
\pinlabel $\beta_1$ [t] at 44 52
\pinlabel $b_2$ [t] at 76 26
\pinlabel $\beta_2$ [t] at 104 52
\pinlabel $b_3$ [t] at 136 26
\pinlabel $\beta_3$ [b] at 126 92

\pinlabel $a$ [b] at 202 112
\pinlabel $\beta_0$ [t] at 202 68

\pinlabel $x_0$ [tr] at 242 44
\pinlabel $y$ [t] at 304 36
\pinlabel $x$ [b] at 304 128
\pinlabel $\alpha$ [br] at 268 98

\pinlabel $a$ [b] at 442 142
\pinlabel $\beta_0$ [b] at 378 88
\pinlabel $b_1$ [b] at 354 36
\pinlabel $\beta_1$ [t] at 382 56
\pinlabel $b_2$ [b] at 412 36
\pinlabel $\beta_2$ [t] at 442 56 
\pinlabel $c$ [t] at 472 30
\pinlabel $\delta_0$ [t] at 504 56
\pinlabel $d_1$ [b] at 532 36
\pinlabel $\delta_1$ [b] at 512 88
\pinlabel $\e_2$  at 354 14
\pinlabel $\e_2$  at 412 14
\pinlabel $\e_1$  at 532 14

\pinlabel $a$ [b] at 600 112
\pinlabel $\alpha$ [r] at 584 92
\pinlabel $x$ [t] at 600 68
\pinlabel $\beta$ [l] at 618 92

\pinlabel $x$ [b] at 676 128
\pinlabel $\alpha$ [r] at 658 88
\pinlabel $y$ [t] at 676 44
\pinlabel $\beta$ [l] at 694 88

\endlabellist
\includegraphics[scale=.7]{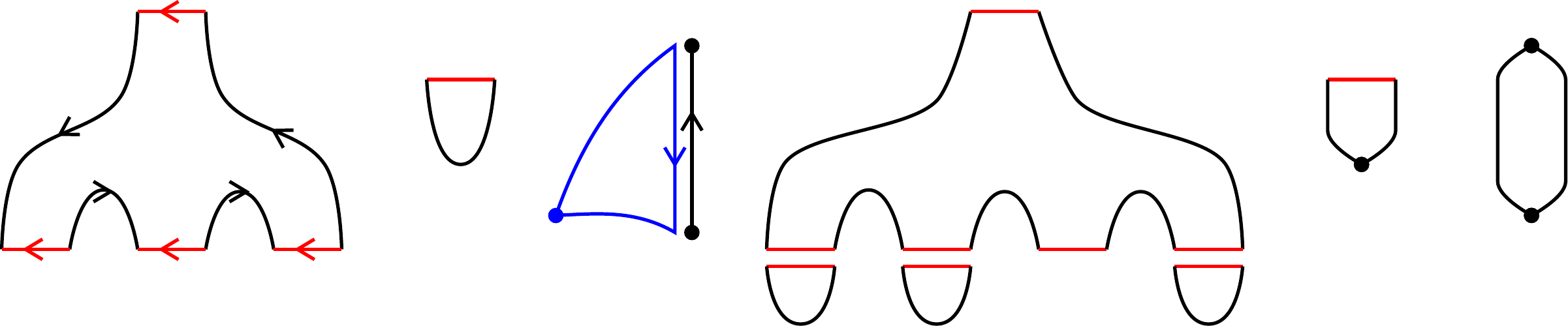}
\vspace{0.1in}
\caption{Schematic depictions of elements of the rigid moduli spaces used to define differentials and homomorphisms.  From left to right the moduli spaces are $\mathcal{M}(a; \mathbf{b})$ (Legendrian contact DGA),  $\mathcal{M}(a; \beta)$ (augmentation induced by a Lagrangian filling), $\mathcal{M}_f(x,y; \alpha)$ with the loop $\alpha$ in blue (Morse complex over $R[\pi_1(M,x_0)]$), $\mathcal{M}(a; \mathbf{b}, c, \mathbf{d})$ (linearized cochain complex and $d_\infty$ in $CF^*$),  $\mathcal{M}(a; \alpha, x, \beta)$ (the map $\phi$ in $CF^*$), and $\mathcal{M}(x; \alpha, y, \beta)$ ($d_0$ in $CF^*$).}
\label{fig:Disks}
\end{figure}

\subsection{Augmentation varieties}  \label{sec:Aug}
Let $S$ be a commutative, ring with identity element.  We view $S$ as being $\Z$-graded and concentrated in degree $0$.  An {\bf augmentation} of $\mathcal{A}(\Lambda)$ is a unital, graded ring homomorphism
\[
\epsilon:\mathcal{A}(\Lambda) \rightarrow S   \quad \mbox{satisfying} \quad \epsilon \circ \partial = 0.
\]
Denote by $\mathit{Aug}(\Lambda, S)$ the set of all augmentations to $S$.

When $\mathbb{F}$ is a field, we refer to $\mathit{Aug}(\Lambda, \mathbb{F})$ as the {\bf augmentation variety} of $\Lambda$ over $\mathbb{F}$.  It is an affine variety:  Choosing an ordering of the degree $0$ Reeb chords $\Reeb_0(\Lambda) = \{q_1, \ldots, q_n\}$ and a group presentation $\pi_1(\Lambda, x_0) = \langle g_1, \ldots, g_m \,|\, r_1, \ldots, r_s\rangle$ 
produces an explicit embedding 
\[
\begin{array}{rcl}
\mathit{Aug}(\Lambda, \mathbb{F}) & \stackrel{\cong}{\rightarrow} & \overline{\mathit{Aug}(\Lambda, \mathbb{F})} \subset \mathbb{F}^n\times(\mathbb{F}^*)^m, \\
\epsilon  & \mapsto & (\epsilon(q_1),\ldots, \epsilon(q_n),\epsilon(g_1),\ldots, \epsilon(g_m)).
\end{array}
\]
Writing $\Reeb_1(\Lambda) = \{p_1, \ldots, p_k\}$ for the degree $1$ Reeb chords of $\Lambda$, the image, $\overline{\mathit{Aug}(\Lambda, \mathbb{F})}$, is the zero set of the polynomial equations 
\[
\partial p_1=0, \ldots, \partial p_k =0,  r_1=1, \ldots, r_s=1 
\]
where we obtain the Laurent polynomials $\partial p_i, r_i \in \mathbb{F}[Q_1, \ldots, Q_n, G^{\pm1}_1,\ldots, G^{\pm1}_m]$ from the differentials $\partial p_i$ and relations $r_i$ by abelianizing; choosing a factorization of each $\pi_1$ element appearing in $\partial p_i$ in terms of the generators $\{g_1, \ldots, g_m\}$; and then replacing lower case letters with capitals.

\begin{remark}
\begin{enumerate}
\item We have presented the definition using $R[\pi_1(\Lambda, x_0)]$ coefficients in $\mathcal{A}(\Lambda)$, but the same variety arises using homology coefficients. For considering augmentations valued in non-commutative DGAs, eg. higher rank representations, the choice of working with $\pi_1$ or $H_1$ coefficients can lead to distinct augmentation varieties.
\item When $\Char \mathbb{F}=2$, it suffices to use $\mathcal{A}(\Lambda;\mathbb{F}_2[\pi_1(\Lambda,x_0)])$ when defining $\mathit{Aug}(\Lambda, \mathbb{F})$.
\item When $\Char \mathbb{F} \neq 2$, $R=\Z$ should be used.  However, the dependence on the choice of spin structure $\xi$ is negligible.  With homology coefficients, the DGAs arising from a different choice of spin structure are related by a  DGA isomorphism having the form $t_i \mapsto \pm t_i$ on a generating set for $\Z[H_1(\Lambda)]$ and restricting to the identity on Reeb chord generators; see \cite[Theorems 4.29 and 4.30]{EES05b}.  In particular, the augmentation varieties associated to distinct spin structures are isomorphic; though their projections to $(\mathbb{F}^*)^m$ need not coincide.
\end{enumerate}
\end{remark}

As augmentations can be viewed as DGA homomorphisms, $(\mathcal{A}(\Lambda),\partial) \rightarrow (\mathbb{F}, 0)$, a natural equivalence relation on $\mathit{Aug}(\Lambda, \mathbb{F})$ is provided by DGA homotopy.  Here, two augmentations $\epsilon_1,\epsilon_2 \in \mathcal{A}(\Lambda, \mathbb{F})$ are {\bf DGA homotopic} if there exists a degree $1$ $(\epsilon_1, \epsilon_2)$-derivation
\[
K: \mathcal{A}(\Lambda) \rightarrow \mathbb{F}, \quad K(xy) = K(x) \e_2(y) + (-1)^{|x|} \e_1(x) K(y)
\]  
satisfying
\[
\epsilon_1-\epsilon_2 = K \circ \partial.
\]
Although it is not obvious, DGA homotopy defines an equivalence relation on $\mathit{Aug}(\Lambda, \F)$, see eg. \cite{FHT}.  We denote the set of DGA homotopy classes of augmentations as $\mathit{Aug}(\Lambda,\F)/{\sim}$.  

\begin{remark}  For $m \in \Z_{\geq 0}$ even, we will also consider {\bf $m$-graded augmentations} which are ring homomorphisms $\epsilon: \mathcal{A}(\Lambda) \rightarrow S$ that only preserve grading mod $m$, i.e. an $m$-graded augmentation satisfies $\epsilon(1) =1$, $\epsilon \circ \partial =0$, and $q_i \in \mathcal{R}(\Lambda)$ must have $\epsilon(q_i) = 0$ unless $|q_i| = 0$ (mod $m$).  In the same manner as above, we can then consider the {\bf $m$-graded augmentation variety}, $\mathit{Aug}_m(\Lambda, \mathbb{F})$, and its quotient by DGA homotopy, $\mathit{Aug}_m(\Lambda, \mathbb{F})/{\sim}$, where DGA homotopy operators, $K$, are now only required to have degree $1$ mod $m$.  Note that the case $m=0$ corresponds to ordinary augmentations.
\end{remark}

\subsection{Augmentations induced by exact Lagrangian fillings}
  Let $L \subset \mathit{Symp}(J^1M)$  be an exact Lagrangian filling 
	of $\Lambda \subset J^1M$  such that $m(L)= 0$, i.e. $L$ agrees with $\R \times \Lambda$ near the positive end of the symplectization and vanishes near the negative end.    
	As in \cite{Ekh2} and using the upgrade to $\pi_1$ or $H_1$-coefficients as in \cite{Pan2}, $L$ induces an $m$-graded augmentation 
\begin{equation} \label{eg:fL}
f_L: \mathcal{A}(\Lambda, R[\pi_1(\Lambda, x_0)]) \rightarrow R[\pi_1(L, x_0)], \quad f_L \circ \partial =0 
\end{equation}
where when $R= \Z$, $L$ should be equipped with a spin structure extending the given spin structure on  $\Lambda$; see \cite{K}.
The homomorphism is defined on the coefficient rings to be the map $\Z[\pi_1(\Lambda,x_0)] \rightarrow \Z[\pi_1(L, x_0)]$ induced by the inclusion $\{t_0\} \times \Lambda \hookrightarrow L$ at some $t_0 \gg 0$ and on generators by
\[
f_L(a) = \sum_{\dim \mathcal{M}(a; \beta) =0} \#\mathcal{M}(a; \beta) \cdot \beta.
\]
The sum is over $0$-dimensional moduli spaces $\mathcal{M}(a;\beta)$ of holomorphic disks, $u$, mapped to $\mathit{Symp}(J^1M)$  with boundary on $L$, having a single boundary puncture $p_0$ that is asymptotic to the Reeb chord $a$ at the positive end of $\mathit{Symp}(J^1M)$, and such that
\[
\beta = [ (\gamma_a^+)^{-1} *  u|_{[p_0,p_0]} * \gamma_a^-] \in \pi_1(L, x_0).
\]

Augmentations to a field, $\mathbb{F}$, then arise from a choice of group homomorphism, $\rho:\pi_1(L, x_0) \rightarrow \mathbb{F}^*$, or  equivalently a ring homomorphism
\[
\rho:R[\pi_1(L, x_0)] \rightarrow \mathbb{F},  
\]
where $R = \mathbb{Z}$ unless $\Char \mathbb{F} =2$. We refer to the composition
\[
\epsilon_{(L, \rho)} := f^*_L \rho = \rho \circ f_L
\]
as the {\bf augmentation of $\Lambda$ induced by $(L,\rho)$}.

\begin{remark} 
Viewing $\mathbb{F}^* = GL(1,\mathbb{F})$, such a group homomorphism $\rho$ is also equivalent to a local system  of rank $1$ $\mathbb{F}$-vector spaces on $L$.
\end{remark}

\begin{remark}
If the Maslov number $m(L) =m \geq 0$ is even, but not necessarily $0$, then $f_L$ and also the $\epsilon_{(L,\rho)}$ only need to preserve grading mod $m$.  I.e., they are $m$-graded.  If $M$ is orientable, then such a filling $L \subset \mathit{Symp}(J^1M)$ is orientable if and only if $m$ is even;  see eg. \cite[Remark 2.9]{PanRu2}.
\end{remark}
  
In our context, it is natural to think of $R[\pi_1(L,x_0)]$ as the Legendrian contact DGA of $L$ (after all, exact Lagrangian submanifolds lift to Legendrians submanifolds in the contactization), so we will write $\mathit{Aug}(L, \mathbb{F})$ for the set of all ring homomorphisms $\rho: R[\pi_1(L,x_0)] \rightarrow \mathbb{F}$.  It is also an affine variety:  fixing an isomorphism
\[
H_1(L) \cong \Z^r \oplus \Z/n_1 \oplus \cdots \oplus \Z/n_s
\]
we get
\[
\mathit{Aug}(L, \mathbb{F}) \cong (\mathbb{F}^*)^{r}  \times Z_{n_1}(\mathbb{F}) \times \cdots  \times Z_{n_s}(\mathbb{F})
\]
where $Z_{n}(\mathbb{F}) := \{x \in \mathbb{F}\,|\, x^n =1\}$ is the group of $n$-th roots of unity in $\mathbb{F}$.  

\begin{proposition} \label{prop:main2} Let $L\subset \mathit{Symp}(J^1M)$ be a Lagrangian filling of $\Lambda$ with Maslov number $m(L) =0$.  If $\Char \F \neq 2$, then let $L$ be equipped with a choice of spin structure.  Then, the map
\[
f_L^*: \mathit{Aug}(L, \mathbb{F}) \rightarrow \mathit{Aug}(\Lambda, \mathbb{F})
\]
is an injective, algebraic map.

Moreover, assuming $\dim L =2$,  allowing for any even $m(L) = m \in \Z_{\geq 0}$, 
the statement applies to $f^*_L:\mathit{Aug}(L, \mathbb{F}) \rightarrow \mathit{Aug}_m(\Lambda, \mathbb{F})$, and the composition 
\[
\mathit{Aug}(L, \mathbb{F}) \stackrel{f^*_L}{\hookrightarrow} \mathit{Aug}_m(\Lambda, \mathbb{F}) \rightarrow \mathit{Aug}_m(\Lambda, \mathbb{F})/{\sim}
\] 
with the projection to DGA homotopy classes is also injective.
\end{proposition}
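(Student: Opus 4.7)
The algebraicity of $f_L^*$ is immediate from its construction: the map $f_L$ itself is a fixed DGA homomorphism whose values on Reeb chord generators are determined once and for all by the holomorphic disk counts of Section \ref{sec:LCH}, so after presenting $\aug(L, \F) \cong (\F^*)^r \times \prod_j Z_{n_j}(\F)$ via a generating set of $H_1(L)$, the composition $f_L^* \rho = \rho \circ f_L$ is given on Reeb chord coordinates by Laurent polynomial expressions in the coordinates of $\rho$. Hence only injectivity needs argument.

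For injectivity, my plan is to fix $\rho_1, \rho_2 \in \aug(L, \F)$ with $\e_i := f_L^* \rho_i$ and study the bilinearized Legendrian contact cochain complex $LCC^*(\Lambda; \e_1, \e_2)$. The central input is Proposition \ref{prop:Seidel}, which extends the Seidel--Ekholm--Dimitroglou Rizell isomorphism to fillings equipped with rank-$1$ local coefficient systems; applied to the pair of fillings $(L, \rho_1)$ and $(L, \rho_2)$ it yields a quasi-isomorphism
\[
LCC^*(\Lambda; \e_1, \e_2) \simeq C^*(L, \Lambda; \rho_2 \otimes \rho_1^{-1}),
\]
where the right side computes relative cohomology of $L$ with twisted coefficients. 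If $\e_1 = \e_2$, the bilinearized cochain complex coincides with the linearized one, so combining the displayed quasi-isomorphism with its specialization to $\rho_1 = \rho_2$ gives
\[
H^*(L, \Lambda; \rho_2 \otimes \rho_1^{-1}) \cong H^*(L, \Lambda; \F).
\]

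To force $\rho_1 = \rho_2$ I will appeal to a topological constraint. Since $m(L)$ is even, $L$ is orientable, and as a filling $L$ is connected with nonempty boundary $\Lambda$. Poincar\'e--Lefschetz duality then provides $H^{\dim L}(L, \Lambda; \rho) \cong H_0(L; \rho^{-1})$, and the coinvariant module $H_0(L; \rho^{-1})$ vanishes whenever $\rho$ is nontrivial (some loop $\gamma$ has $\rho(\gamma) \neq 1$, so $\rho(\gamma)^{-1} - 1 \in \F^*$ kills the quotient) while it equals $\F$ when $\rho$ is trivial. Comparing top-degree dimensions in the displayed isomorphism therefore forces $\rho_2 \otimes \rho_1^{-1}$ to be trivial, giving $\rho_1 = \rho_2$ and hence injectivity of $f_L^*$ in every dimension.

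For the moreover statement, the bilinearized cochain complex is invariant up to quasi-isomorphism under DGA homotopy of either input (a DGA homotopy operator $K$ yields a chain homotopy on the bilinearized complex via the standard algebraic formula), so DGA-homotopic augmentations produce the same bilinearized cohomology and the topological argument above still forces $\rho_1 = \rho_2$. Passing from $m = 0$ to any even $m \in \Z_{\geq 0}$ merely requires that one work in the $m$-graded setting throughout; in dimension $\dim L = 2$ this is unproblematic because the topology of $L$ is concentrated in degrees $0, 1, 2$ and the comparison of top-degree terms still distinguishes trivial from nontrivial rank-$1$ local systems. The main obstacle in this plan is therefore the establishment of Proposition \ref{prop:Seidel}: one must set up mixed moduli spaces interpolating between the bilinearized Floer complex on $\Lambda$ and a Morse-theoretic complex on $L$ with local coefficients, verify transversality and compactness, and carefully track how the monodromies of $\rho_1$ and $\rho_2$ around disk boundaries combine to produce the twisted coefficient system $\rho_2 \otimes \rho_1^{-1}$ on the right side.
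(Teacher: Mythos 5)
Your proposal is correct and follows the same overall strategy as the paper: compare the self hom-complex of $\epsilon_{(L,\rho_1)}$ with the cross hom-complex of $(\epsilon_{(L,\rho_1)},\epsilon_{(L,\rho_2)})$ via the local-coefficient Seidel--Ekholm--Dimitroglou Rizell isomorphism, and conclude from the fact that twisted cohomology of $L$ distinguishes the trivial rank~$1$ local system from nontrivial ones. The differences are in formulation rather than substance. The paper's Proposition \ref{prop:Seidel} is stated with \emph{absolute} Morse cohomology $H^*(L,h;\rho_1^t\otimes\rho_2)$ for a proper Morse function adapted to the cylindrical end, and the detection step is Proposition \ref{prop:standard}: $H^0$ (the invariants) vanishes exactly when the local system is nontrivial; this avoids any appeal to duality or orientability. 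Your version detects nontriviality in top degree of the \emph{relative} cohomology via Poincar\'e--Lefschetz duality and coinvariants, which is the dual statement and also works (orientability is supplied by the spin structure when $\Char\F\neq 2$, and duality with local coefficients needs no orientability in characteristic $2$), but note that the proposition you are invoking is literally the absolute-cohomology statement. For the even-$m$ surface case the paper chooses $h$ with only index $0$ and $1$ critical points so that the even part of the Morse cohomology is exactly $H^0$; your top-degree comparison survives the collapse of grading mod $m$ only because $H^0(L,\Lambda;\cdot)=0$ for a connected filling with nonempty boundary, a point worth making explicit when $m=2$. Finally, for the homotopy statement the paper routes through \cite[Proposition 5.19]{NRSSZ} (homotopic augmentations are isomorphic in $\mathrm{Aug}_+$, hence have isomorphic hom cohomology with any fixed object); your direct claim that bilinearized cohomology is invariant under DGA homotopy of either input is the same standard fact, though the mechanism is a quasi-isomorphism between the two complexes (whose differentials differ) built from $K$ and its higher insertions, not a chain homotopy on a single complex as your parenthetical suggests.
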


The proposition will be proved at the end of the next section.

Note that when $\Lambda \subset \R^3 = J^1\R$ is one dimensional and $L$ is orientable, we have that $m(L)$ must be even and  $H_1(L)$ is free abelian.  Thus, the augmentation map has the form
\[
f_L^*: (\mathbb{F}^*)^{\dim H_1(L)} \hookrightarrow \mathit{Aug}_m(\Lambda, \mathbb{F})
\] 	
matching the statement of Proposition \ref{prop:1} from the introduction.

\section{Seidel-Ekholm-Dimitroglou Rizell isomorphism with local coefficients}	 \label{sec:3}
In this section, we establish a version of the Seidel-Ekholm-Dimitroglou Rizell isomorphism with local coefficients, see Proposition \ref{prop:Seidel}.  At the conclusion of the section, the isomorphism is then applied to prove Proposition \ref{prop:main2}.  As preparation we recall a version of Morse cohomology with local coefficients in \ref{sec:Morse}.  The version of linearized cohomology we work with is from the positive augmentation category; see Sections \ref{sec:lincoh} and \ref{sec:positive} for quick definitions.  Although we focus on rank $1$ local systems and augmentations, a version of Proposition \ref{prop:Seidel} holds for higher rank local systems and representations of the Legendrian contact DGA; it is explained in a series of remarks.   

\subsection{Morse complex with local coefficients}  \label{sec:Morse} Let $M$ be a manifold and  $f:M \rightarrow \R$ be a proper, bounded below Morse function with finitely many critical points, $\mathit{Crit}(f)$.  
To define a version of the Morse cohomology complex suitable for use with local coefficients, fix a basepoint $x_0 \in M$ and for each $x \in \mathit{Crit}(f)$ choose a basepoint path $\gamma_x:[0,1] \rightarrow M$ from $x$ to $x_0$.  With $R = \mathbb{F}_2$ or $\Z$, let 
\[
C^*(M,f; R[\pi_1]) =  \mathit{Span}_{R[\pi_1(M, x_0)]^\mathit{op}} \mathit{Crit}(f)
\]
be the free {\it right} $R[\pi_1(M,x_0)]$-module generated by $\mathit{Crit}(f)$ with $\Z$-grading from the Morse index of critical points.  To define the differential, fix a choice of metric so that the gradient vector field $\nabla f$ satisfies the Morse-Smale condition.  For critical points $x,y \in \mathit{Crit}(f)$ with $|x| = |y|+1$ and $\alpha \in \pi_1(M,x_0)$ let $\mathcal{M}_f(x,y; \alpha)$ denote the moduli space of {\it ascending} gradient trajectories, $\eta: \R \rightarrow M$, $\eta' = \nabla f \circ \eta$, from $y$ to $x$  such that
\[
\alpha = [\gamma_x^{-1} * \eta^{-1} * \gamma_y] \in \pi_1(M, x_0) 
\]
is the homotopy class arising from the corresponding {\it descending} trajectory from $x$ to $y$.  See Figure \ref{fig:Disks}.   
Then, the differential $d:C^*(M,f; R[\pi_1]) \rightarrow  C^{*+1}(M,f; R[\pi_1])$ is the right $R[\pi_1(M,x_0)]$-module homomorphism satisfying
\[
d y = \sum_{\dim \mathcal{M}_f(x,y; \alpha) =0} \#\mathcal{M}_f(x,y; \alpha)\cdot x \cdot \alpha.
\]
When $R = \mathbb{Z}$, to make the signed count of $\mathcal{M}_f(x,y;\alpha)$, the moduli spaces are coherently oriented in a standard way by a choice of orientations for $M$ and the descending manifolds of the critical points of $f$;  see eg. \cite[Section 5.2]{EES07}.

Next, we define the  Morse cohomology complex, $C^*(M,f; \rho)$, with respect to a local system, 
\[
\rho:\pi_1(M, x_0) \rightarrow \mathbb{F}^*.
\]
The representation $\rho$ makes $\mathbb{F}$ into a left $R[\pi_1(M,x_0)]$-module and we define
\[
C^*(M,f; \rho) =  C^*(M,f; R[\pi_1]) \otimes_{R[\pi_1(M,x_0)]} \mathbb{F}
\]
with the differential $d \otimes 1$.  In other words, $C^*(M,f; \rho)= \mathit{Span}_\mathbb{F}\mathit{Crit}(f)$ with 
\[
d y = \sum \#\mathcal{M}_f(x,y; \alpha)\cdot \rho(\alpha) \cdot x.
\]

The Morse cohomology, $H^*(M,f; \rho)$, is isomorphic to the standard singular cohomology (or sheaf cohomology) with respect to the local coefficient system specified by $\rho$; see \cite{BHS}. The following is then a standard fact.

\begin{proposition} \label{prop:standard}
Suppose that $M$ is connected, and let $\rho:\pi_1(M, x_0) \rightarrow \mathbb{F}^*$.  Then, 
\[
H^0(M,f;\rho) \cong \left\{\begin{array}{cr} \mathbb{F}, &  \mbox{$\rho$ is trivial}, \\  0, & \mbox{else}. \end{array} \right. 
\]
\end{proposition}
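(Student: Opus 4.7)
The plan is to reduce the statement to the well-known computation of $H^0$ of a connected space with coefficients in a rank one local system, using the identification already cited immediately before the proposition: by \cite{BHS}, $H^*(M,f;\rho)$ agrees with the singular (or sheaf) cohomology of $M$ with local coefficients determined by $\rho$. So it suffices to compute the latter in degree zero.

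The key standard fact I would invoke is that, for a connected space $M$ with basepoint $x_0$ and a local system of $\mathbb{F}$-vector spaces specified by $\rho:\pi_1(M,x_0)\to\mathbb{F}^*$,
\[
H^0(M;\rho) \cong \mathbb{F}^{\pi_1(M,x_0)} := \{ v \in \mathbb{F} \,:\, \rho(\alpha)\cdot v = v \text{ for all } \alpha\in \pi_1(M,x_0)\}.
\]
A degree-zero cochain with values in the local system is a locally constant section of the associated bundle; on a connected space such a section is determined by its value $v$ at $x_0$, subject to the monodromy invariance $\rho(\alpha) v = v$ for every loop $\alpha$.

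Finally, I would just read off the two cases. If $\rho$ is trivial, then $\rho(\alpha)=1$ for every $\alpha$, so every $v\in\mathbb{F}$ is invariant and $H^0(M,f;\rho) \cong \mathbb{F}$. If $\rho$ is non-trivial, pick some $\alpha_0$ with $\rho(\alpha_0)\neq 1$; then the invariance equation $\rho(\alpha_0)v = v$ becomes $\bigl(\rho(\alpha_0)-1\bigr) v = 0$, and since $\rho(\alpha_0)-1$ is a nonzero element of the field $\mathbb{F}$, this forces $v=0$, so $H^0(M,f;\rho)=0$.

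There is no serious obstacle here: the entire argument is bookkeeping once one invokes the Morse/local system comparison, and the essential content reduces to the fact that a nonzero element of a field is invertible. If one wanted a more self-contained Morse-theoretic proof avoiding the appeal to \cite{BHS}, one could instead arrange (by standard handle cancellations) that $f$ has a unique local minimum $m_0$, so that $C^0(M,f;\rho) = \mathbb{F}\cdot m_0$, and then analyze $\ker\bigl(d\colon C^0\to C^1\bigr)$ directly by matching coefficients of index-$1$ critical points whose descending gradient flow lines realize a generating loop on which $\rho$ is nontrivial; this leads to the same case analysis.
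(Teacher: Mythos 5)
Your argument is correct, but your primary route differs from the one in the paper. You compute $H^0$ by invoking the identification (cited just before the proposition, via \cite{BHS}) of $H^*(M,f;\rho)$ with singular/sheaf cohomology in the local system determined by $\rho$, and then using the standard fact that on a connected space $H^0$ with local coefficients is the space of monodromy invariants $\{v\in\mathbb{F} : \rho(\alpha)v=v \ \forall \alpha\}$; the dichotomy then follows because a nonzero element $\rho(\alpha_0)-1$ of the field is invertible. The paper instead gives a short self-contained computation inside the Morse complex itself: assume $f$ has a single local minimum $m$ (so $C^0=\mathbb{F}\cdot m$ with $x_0=m$), note that the index-$1$ critical points $y_1,\ldots,y_n$, each joined to $m$ by two gradient trajectories, determine a generating set $Y_1,\ldots,Y_n$ of $\pi_1(M,x_0)$, and check with a suitable choice of basepoint paths that $d\,m=\sum_i (1-\rho(Y_i))\,y_i$, so $\ker(d\colon C^0\to C^1)$ vanishes unless $\rho(Y_i)=1$ for all $i$. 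Your approach is shorter and avoids choosing basepoint paths and orientations, at the cost of leaning entirely on the external comparison result; the paper's argument stays within the complex it has just defined (the one actually used later in the proof of Proposition \ref{prop:main2}) and exhibits the differential on $C^0$ explicitly. Your closing alternative sketch --- single minimum, then matching coefficients of index-$1$ critical points whose pairs of flow lines realize generators of $\pi_1$ --- is essentially the paper's proof, so you have both routes in hand.
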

\begin{proof}
We can assume $f$ has a single local minimum $m$.  Let $y_1, \ldots, y_n$ denote the Morse index $1$ critical points of $f$.  Assigning a choice of orientations to the descending manifolds of the $y_1, \ldots, y_n$ produces  a generating set, $Y_1, \ldots, Y_n$, for $\pi_1(M,x_0)$ where we take $x_0 = m$.  Each of the $y_i$ is connected to $m$ by two gradient trajectories; using an appropriate one of them for the base point path, $\lambda_{y_i}$, leads to
\[
d\, m = \sum_i (1-\rho(Y_i)) y_i.
\]
Thus, $H^0(M,f;\rho) = \ker (d:C^0 \rightarrow C^1)$ is $0$ unless $1=\rho(Y_i)$ for all $i$. 
\end{proof}

\begin{remark} [Higher rank]  Given a higher rank local system $\rho: \pi_1(M,x_0) \rightarrow GL(V)$, we can view $V$ as a left $R[\pi_1(M,x_0)]$-module and then define the complex of $\mathbb{F}$ vector spaces 
\[
C^*(M,f; \rho) = C^*(M, f; R[\pi_1]) \otimes_{R[\pi_1(M,x_0)]} V.
\]
The differential looks like
\begin{equation} \label{eq:MorseHigher}
d( y \otimes v ) = \sum \#\mathcal{M}_f(x,y; \alpha) x \otimes (\rho(\alpha) v).
\end{equation}
\end{remark}

\subsection{Linearized cohomology} \label{sec:lincoh}
Let $\Lambda =\Lambda_1\cup \Lambda_2$ be a two component  Legendrian in $J^1M$ and let $\epsilon_i \in \mathit{Aug}(\Lambda_i, \mathbb{F})$.  With this data we form a {\bf linearized cochain complex} that we notate as, $C^*_{1,2}(\epsilon_1,\epsilon_2)$.  Said more precisely, it is the $(1,2)$-component in the link splitting of the linearized cohomology complex for $\Lambda$ with respect to the pure augmentation $\epsilon = (\e_1,\e_2)$; see eg. \cite{Mish, NRSSZ, CDGG}.  We review the definition.

Write $\Reeb_{1,2}(\Lambda)$ for the Reeb chords of $\Lambda$ with initial point on $\Lambda_1$ and ending point on $\Lambda_2$, i.e.
\[
\Reeb_{1,2}(\Lambda) = \{ q \in \Reeb(\Lambda) \,|\, q^+ \in \Lambda_2, \, q^- \in \Lambda_1 \},
\]
 and put
\[
C^*_{1,2}(\epsilon_1,\epsilon_2) = \mathit{Span}_{\mathbb{F}} \Reeb_{1,2}(\Lambda).
\]
We consider moduli spaces of disks as in the Legendrian contact DGA of the form 
\[
\mathcal{M}(a; \mathbf{b}, c, \mathbf{d})
\] 
where
\[
\begin{array}{cll} a,c \in \Reeb_{1,2}(\Lambda),   & \mathbf{b} = \beta_0b_1\beta_1\cdots b_k\beta_k, \quad &\mathbf{d} = \delta_0d_1\delta_1\cdots d_l\delta_l \\
& & \\
\mbox{with} & \beta_i \in \pi_1(\Lambda_2), \, b_i \in \Reeb(\Lambda_2), \quad 
  &\delta_i \in \pi_1(\Lambda_1), \, d_i \in \Reeb(\Lambda_1).
\end{array}
\]
For such disks, the puncture at $a$ is positive while the punctures at all other Reeb chords are negative;  the $\beta_i$ and $\delta_i$ are concatenations of counter-clockwise boundary segments with basepoint paths in the expected manner, cf. Section \ref{sec:LCH}.
Note that the counterclockwise portion of the boundary from the $a$ puncture to the $c$ puncture is mapped to $\Lambda_2$ (with possibile  punctures at Reeb chords of $\Lambda_2$), and the other half of the boundary is mapped to $\Lambda_1$ (again with possible punctures at Reeb chords).  The differential on $C^*_{1,2}(\epsilon_1,\epsilon_2)$ is then defined 
by summing over all $0$-dimensional moduli spaces and applying $\epsilon_i$ to the terms in $\mathcal{A}(\Lambda_i)$,
\begin{equation} \label{eq:lin}
d \,c = \sum \# \mathcal{M}(a; \mathbf{b}, c, \mathbf{d}) \cdot 
\epsilon_1(\mathbf{d})\, a\, \epsilon_2( \mathbf{b}).
\end{equation}
In contrast to the Legendrian contact DGA, $C^*(\epsilon_1,\epsilon_2)$ is a cohomologically graded complex, $d:C^*(\epsilon_1,\epsilon_2) \rightarrow C^{*+1}(\e_1,\e_2)$, and the differential maps negative punctures to positive punctures.\footnote{A mnemonic: For all of the maps defined by disk counts the output is obtained from reading the boundary conditions around the disk {\it counter-clockwise} from the puncture corresponding to the input element.  In notation for moduli spaces, we always start with the positive puncture (which may or may not be the input element of the map).}  See Figure \ref{fig:Disks} for a schematic depiction.

\begin{remark} [Higher rank]  The linearized cohomology generalizes to higher dimensional representations as follows, cf. \cite{CNS,CDGG1}.  Suppose now that $V_1$ and $V_2$ are $\mathbb{F}$-vector spaces with representations
\[
\phi_i: \mathcal{A}(\Lambda_i) \rightarrow \mathit{End}(V_i), \quad \phi_i(1) = \mbox{id}, \quad \phi_i \circ \partial=0.
\]
A linearized cochain complex can then be defined as 
\[
C^*_{1,2}(\phi_1,\phi_2) = V_1^\vee \otimes_\mathbb{F} \mathit{Span}_{\mathbb{F}} \Reeb_{1,2}(\Lambda) \otimes_\mathbb{F} V_2
\]
with differential satisfying for $\xi_1 \in V_1^\vee, c \in \Reeb_{1,2}(\Lambda)$, and $v_2 \in V_2$, 
\begin{equation} \label{eq:LinHigher}
d(\xi_1 \otimes c \otimes v_2) = \sum \# \mathcal{M}(a; \mathbf{b}, c, \mathbf{d}) \cdot 
(\phi_1(\mathbf{d})^\vee \xi_1)\otimes a \otimes(\phi_2( \mathbf{b})v_2).
\end{equation}

\end{remark}

\subsection{The positive augmentation category} \label{sec:positive}
Now we return to the case where $\Lambda \subset J^1M$ has a single component.  In \cite{NRSSZ} a unital $A_\infty$-category, $\mbox{Aug}_+(\Lambda; \mathbb{F})$, having $\mathit{Aug}(\Lambda, \mathbb{F})$ as its underlying set of objects  is constructed for $1$-dimensional Legendrian knots.  In particular, for each pair of augmentations $\epsilon_1, \epsilon_2 \in \mathit{Aug}(\Lambda, \mathbb{F})$ there is a {\bf hom-complex} in $\mbox{Aug}_+(\Lambda; \mathbb{F})$ denoted by
$\mathit{Hom}_+^*(\epsilon_1,\epsilon_2)$.  These complexes work (though not the higher $A_\infty$-operations) without requiring additional discussion for Legendrians of any dimension, and they arise as a special case of the above construction:  Form a two component link with $\Lambda_1 = \Lambda$ and $\Lambda_2 = \Lambda_{+,F}$ where $\Lambda_{+,F}$ is obtained from shifting $\Lambda$ a small amount in the positive Reeb direction (add a small $\delta>0$ to the $z$-coordinate) and then perturbing using a Morse function $F:\Lambda\rightarrow \R$ as in \cite[Section 4]{NRSSZ} or \cite[Section 6]{DR}.   With this perturbation, the $\Reeb_{1,2}$ Reeb chords are in bijection with the original Reeb chords of $\Lambda$ together with $\mathit{Crit}(F)$. With the perturbation small enough, the Legendrian contact DGA of $\Lambda_{+,F}$ is canonically identified with that of $\Lambda$.  Thus, we can consider $(\epsilon_1,\epsilon_2)$ as augmentations for the pair $(\Lambda_1, \Lambda_2)$ and set
\[
\mathit{Hom}_+^*(\epsilon_1,\epsilon_2) = C^{*-1}_{1,2}(\epsilon_1, \epsilon_2).
\]

\subsection{The isomorphism via Floer cohomology for Lagrangian fillings}

Let $L$ be an exact Lagrangian filling of $\Lambda$ with even Maslov number, $m(L)$, and let $h:L \rightarrow \R$ be a proper Morse function on $L$ that agrees with the projection $L\subset \mathit{Symp}(J^1M) = \R \times J^1M \rightarrow \R$ on $h^{-1}([T,+\infty))$ for $T \gg 0$.  In the case that $\mathbb{F} \neq 2$, assume that $L$ is equipped with a spin structure extending a given spin structure on $\Lambda$.
\begin{proposition} \label{prop:Seidel}  For any pair of local systems, $\rho_1,\rho_2: \pi_1(L, x_0) \rightarrow \mathbb{F}^*$, there is an isomorphism
\begin{equation} \label{eq:Seidel}
H^*Hom_+(\epsilon_{(L, \rho_1)}, \epsilon_{(L, \rho_2)}) \cong H^*(L,h; \rho_1^t \otimes \rho_2)
\end{equation}
that preserves grading mod $m(L)$.
\end{proposition}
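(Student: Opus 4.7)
Following the strategy of Seidel--Ekholm--Dimitroglou Rizell \cite{Ekh,DR}, I would establish \eqref{eq:Seidel} by constructing a Floer-type ``mapping cone'' complex
\[
CF^* \;=\; C^*_{1,2}\bigl(\epsilon_{(L,\rho_1)},\epsilon_{(L,\rho_2)}\bigr) \;\oplus\; C^*(L,h;\rho_1^t\otimes\rho_2)
\]
(with a degree shift on one summand chosen so that the differential raises total grading by $1$), equipped with a triangular differential $d = \left(\begin{smallmatrix} d_\infty & \phi \\ 0 & d_0 \end{smallmatrix}\right)$, and then showing $CF^*$ is acyclic.  The three pieces count rigid pseudo-holomorphic disks in $\mathit{Symp}(J^1M)$ with boundary on the 2-copy filling $L \cup L_{+,F}$ (where $L_{+,F}$ is the Reeb-shift of $L$ perturbed using $F$): $d_\infty$ counts the moduli spaces $\mathcal{M}(a;\mathbf{b},c,\mathbf{d})$ of Section \ref{sec:lincoh}, $\phi$ counts $\mathcal{M}(a;\alpha,x,\beta)$ with a single positive Reeb chord puncture $a$ and a marked boundary point at a critical point $x\in\mathit{Crit}(h)$, and $d_0$ combines the ordinary Morse differential on $L$ with disk counts $\mathcal{M}(x;\alpha,y,\beta)$; see Figure \ref{fig:Disks}.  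Along the two boundary arcs lying on the two copies of $L$, homotopy classes $\alpha,\beta\in\pi_1(L,x_0)$ are weighted by $\rho_1$ and $\rho_2$, and at negative Reeb chord punctures these weights combine with $f_L$ to produce $\epsilon_{(L,\rho_i)} = \rho_i\circ f_L$, so that $d_\infty$ is exactly the linearized cochain differential for the pair of filling-induced augmentations.

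I would verify $d^2 = 0$ by the standard SFT compactness and gluing analysis of the boundaries of $1$-dimensional moduli spaces: broken configurations decompose into the three expected patterns, giving $d_\infty^2 = 0$, $d_0^2 = 0$, and the mapping-cone compatibility $d_\infty\phi \pm \phi d_0 = 0$.  The relation $f_L\circ\partial = 0$ from \eqref{eg:fL} together with the multiplicativity of $\rho_1,\rho_2$ on $\pi_1(L,x_0)$ is exactly what is needed to make the local-system-weighted counts assemble consistently.  For the acyclicity of $CF^*$ I would adapt the deformation-invariance argument of \cite{Ekh,DR}: a one-parameter family obtained by increasing the Reeb-direction separation between $L$ and $L_{+,F}$ (or equivalently by a small compactly supported Hamiltonian displacement of $L_{+,F}$ away from $L$ near the negative end of $\mathit{Symp}(J^1M)$) produces continuation quasi-isomorphisms, and in a suitable limit $CF^*$ is identified with an explicit model complex that is acyclic by a Poincar\'e-lemma style argument in the spirit of Proposition \ref{prop:standard}.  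Triangularity of $d$ then forces $\phi$ to induce an isomorphism $H^*(L,h;\rho_1^t\otimes\rho_2)\xrightarrow{\cong} H^*C^*_{1,2}$, which is \eqref{eq:Seidel} after the shift $\mathit{Hom}_+^* = C^{*-1}_{1,2}$.  The mod-$m(L)$ grading preservation is immediate since every moduli space in play respects this grading.

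The main technical obstacle is checking that the local system appearing on the Morse side is $\rho_1^t \otimes \rho_2$ and not $\rho_1\otimes \rho_2$.  On a $d_0$-disk the two boundary arcs lie on different copies of $L$, and the counter-clockwise boundary convention forces the arc on the copy labeled by $\rho_1$ to be traversed against its natural orientation (induced from the inclusion $\{t_0\}\times\Lambda \hookrightarrow L$), producing the character $\rho_1^{-1}=\rho_1^t$ in the weight.  Tracking this reversal cleanly -- together with the basepoint paths, the spin structure signs when $\Char\mathbb{F}\neq 2$, and the canonical identification of the DGA of $\Lambda_{+,F}$ with that of $\Lambda$ under the small perturbation -- is where the bookkeeping is most intricate, and is the primary place where the local-coefficient upgrade of \cite{Ekh,DR} requires real care.
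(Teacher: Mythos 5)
Your proposal follows essentially the same route as the paper: the two-copy Floer complex $CF^*$ for $L$ and its Morse-perturbed pushoff with triangular differential $\bigl(\begin{smallmatrix} d_\infty & \phi \\ 0 & d_0\end{smallmatrix}\bigr)$, acyclicity via the invariance/displacement argument of \cite{Ekh,DR,CDGG}, so that $\phi$ is a quasi-isomorphism, together with the identification of $(C_0,d_0)$ with the twisted Morse complex, where the thin-strip/gradient-trajectory correspondence forces the weight $\rho_1(\beta)\rho_2(\alpha)$ with $\beta\simeq\alpha^{-1}$, i.e.\ the local system $\rho_1^t\otimes\rho_2$ -- exactly the bookkeeping point you flag. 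The only cosmetic difference is that the paper first builds $CF^*(L_1,L_2;\pi_1)$ as an $(R[\pi_1(L_1)],R[\pi_1(L_2)])$-bimodule and then tensors down via $\rho_1,\rho_2$ (using universal coefficients to keep acyclicity), rather than weighting by the local systems from the outset.
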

Here, we view the $\rho_i$ as representations on the $1$-dimensional vector space $\mathbb{F}^1$.  Using the standard definitions\footnote{For representations $\sigma: G \rightarrow \mathit{GL}(V)$ and $\tau:G \rightarrow \mathit{GL}(W)$, with $V^\vee$ denoting the dual vector space, the dual representation $\sigma^t: G \rightarrow \mathit{GL}(V^\vee)$ has $\sigma^t(g) = (\sigma(g^{-1}))^\vee$ and $\sigma\otimes\tau:G \rightarrow GL(V \otimes W)$ has $(\sigma \otimes \tau)(g) = \sigma(g) \otimes \tau(g)$.} of tensor product and dual  for group representations, along with the canonical isomorphism of vector spaces $(\mathbb{F}^1)^\vee\otimes_\mathbb{F}\mathbb{F}^1= \mathbb{F}^1$ leads to 
\begin{equation} \label{eq:rho}
(\rho_1^t\otimes \rho_2)(g) = \rho_1(g)^{-1} \cdot \rho_2(g).
\end{equation}

\begin{proof}
The proof is based on the acyclicity of the version of the Lagrangian Floer complex for fillings used in \cite{Ekh, DR}, upgraded to incorporate local systems.  For a pair of transversally intersecting Lagrangian fillings, $L_1$ and $L_2$, of $\Lambda_1$ and $\Lambda_2$ (equipped with Maslov potentials for grading purposes) a $\Z/m\Z$-graded complex where $m=\mbox{gcd}(m(L_1),m(L_2))$ of the form
\[
CF^*(L_1,L_2) = C_\infty^*(L_1,L_2)\oplus C_0^*(L_1,L_2)
\]
is defined where $C_\infty^*(L_1,L_2)$ and $C_0^*(L_1,L_2)$ are modules freely generated by the $\Reeb_{1,2}$ Reeb chords of $\Lambda = \Lambda_1 \sqcup \Lambda_2$ and the points of intersection $L_1 \cap L_2$, respectively. A generalization where $CF^*(L_1,L_2)$ is a right $R[\pi_1(L_2)]$-module  appears in the work \cite[Section 8.2]{CDGG} (that also considers Lagrangian cobordisms with non-empty negative ends).  To also keep track of homotopy classes of boundary segments on $L_1$ one can define $CF^*(L_1,L_2; \pi_1)$ to be the free $(R[\pi_1(L_1)], R[\pi_1(L_2)])$-bimodule with the above generators, i.e. the free left $R[\pi_1(L_1)]\otimes_R R[\pi_1(L_2])^{\mathit{op}}$-module.  The differential has the triangular form
\[
\left[\begin{array}{cc} d_\infty & \phi \\ 0 & d_0 \end{array}\right] :C_\infty^*(L_1,L_2;\pi_1)\oplus C_0^*(L_1,L_2;\pi_1) \rightarrow C_\infty^*(L_1,L_2;\pi_1)\oplus C_0^*(L_1,L_2;\pi_1).
\]
The components are as follows:

\medskip

\noindent {\bf 1.}  The subcomplex $(C_\infty^*(L_1,L_2;\pi_1), d_\infty)$ can be viewed as the grading shifted linearized complex $C_{1,2}^{*-2}(f_{L_1},f_{L_2})$ (with grading collapsed mod $m$) for the $R[\pi_1(L_i)]$-valued augmentations $f_{L_i}$ of $\Lambda_i$ induced by the fillings $L_i$ from equation (\ref{eg:fL}).  Explicitly, it is 
\begin{align*}
C_\infty^*(L_1,L_2;\pi_1) & = \mathit{Span}_{R[\pi_1(L_1)]\otimes_R R[\pi_1(L_2)])^{\mathit{op}}} \Reeb_{1,2}(\Lambda_1 \sqcup \Lambda_2)
\end{align*}
where the grading of Reeb chords is $2$ larger than in the Legendrian contact DGA, with differential
\begin{equation} \label{eq:dinf}
d c = \sum \# \mathcal{M}(a; \mathbf{b}, c, \mathbf{d}) \, 
f_{L_1}(\mathbf{d}) \cdot  a \cdot f_{L_2}( \mathbf{b}).
\end{equation}

\medskip

\noindent {\bf 2.}  The map $\phi:C_0^*(L_1,L_2;\pi_1) \rightarrow C_\infty^{*+1}(L_1,L_2;\pi_1)$ is defined by 
\[
\phi(x) = \sum  \#\mathcal{M}(a; \alpha, x, \beta)\,  \beta \cdot a \cdot \alpha
\]
where the moduli space $\mathcal{M}(a; \alpha, x, \beta)$ consists of holomorphic disks  $u$ in $\mathit{Symp}(J^1M)$ with two boundary punctures: a positive puncture at $a \in \Reeb_{1,2}(\Lambda_1 \sqcup \Lambda_2)$, and a puncture limiting to a double point $x \in L_1\cap L_2$.  The boundary segments $u_{[a,x]}$ and $u_{[x,a]}$ are required to map to $L_2$ and $L_1$ and satisfy
\[
[(\gamma_a^+)^{-1}* u_{[a,x]}* \gamma_x^+] = \alpha \in \pi_1(L_2,x_2), \quad [(\gamma_x^-)^{-1} *  u_{[x,a]}*\gamma_a^-] = \beta \in \pi_1(L_1,x_1).
\]

\medskip

\noindent {\bf 3.}  The map $d_0:C_0^*(L_1,L_2;\pi_1) \rightarrow C_0^{*+1}(L_1,L_2;\pi_1)$ is
\[
d_0 y = \sum \#\mathcal{M}(x; \alpha,y,\beta) \, \beta \cdot x \cdot \alpha
\]
where $\mathcal{M}(x;\alpha,y,\beta)$ consists of disks with two punctures limiting to $x, y \in L_1 \cap L_2$, such that the counterclockwise boundary intervals $[x,y]$ and $[y,x]$ are mapped to $L_2$ and $L_1$ respectively with 
\[
[(\gamma_x^+)^{-1}* u_{[x,y]}* \gamma_y^+] = \alpha \in \pi_1(L_2,x_2),  \quad [(\gamma_y^-)^{-1} *  u_{[y,x]}*\gamma_x^-] = \beta \in \pi_1(L_1,x_1).
\] 

\medskip

The construction and invariance properties of $CF^*(L_1,L_2; \pi_1)$ as in \cite{Ekh,DR} go through with the same proofs, observing that, when verifying the required identities of maps on generators, in all cases broken disks that glue into the same $1$-dimensional moduli spaces contribute the same left and right $\pi_1$-coefficients. In particular, since the invariance statements allow  $L_1$ and $L_2$ to be modified in such a way as to remove all generators, we get the following;  see also \cite[Theorem 8.3]{CDGG} for this statement in a $\pi_1$-coefficient  setting very similar to the present one.  

\medskip

\noindent {\bf Fact:}  $CF^*(L_1,L_2; \pi_1)$ is acyclic.

\medskip
To verify (\ref{eq:Seidel}), given $L$ with $\rho_1$ and $\rho_2$, put $L_1=L$ and let $L_2=L^+$ be the perturbation of $L$ constructed from the Morse function $h:L \rightarrow \R$ together with $F:\Lambda \rightarrow \R$ as in \cite[Section 6.2]{DR}.  Use the representations $\rho_1$ and $\rho_2$ to change $CF^*(L_1,L_2;\pi_1)$ into an $\mathbb{F}$-module.  To do this, put $S:=R[\pi_1(L_1,x_1)] \otimes R[\pi_1(L_2,x_2)]^\mathit{op}$ and tensor
\begin{equation} \label{eq:CF}
CF^*((L_1, \rho_1),(L_2, \rho_2)) :=   \mathbb{F} \otimes_S CF^*(L_1,L_2;\pi_1) 
\end{equation}
where the right $S$-module structure on $\mathbb{F}$ is from the homomorphism
\[
S^{\mathit{op}} \rightarrow (\mathbb{F} \otimes_\mathbb{F} \mathbb{F}) \stackrel{\cong}{\rightarrow} \mathbb{F}= \mathit{End}(\mathbb{F}), \quad \beta \otimes \alpha \, \mapsto \rho_1(\beta) \otimes \rho_2(\alpha) \, \mapsto \,\rho_1(\beta) \cdot \rho_2(\alpha).
\]
 The result remains acyclic by the universal coefficient theorem, so that $\phi$ still provides a quasi-isomorphism
\[
\phi: C_0^*((L_1, \rho_1),(L_2, \rho_2)) \stackrel{\mbox{\small q.i.}}{\longrightarrow} C_\infty^{*+1}((L_1, \rho_1),(L_2, \rho_2)).
\] 
After tensoring, since (\ref{eq:dinf}) turns into (\ref{eq:lin})  the subcomplex $(C_\infty, d_\infty)$ becomes precisely  
\[
C_\infty^{*}((L_1, \rho_1),(L_2, \rho_2))= C_{1,2}^{*-2}(\epsilon_{(L,\rho_1)}, \epsilon_{(L,\rho_2)}) =\mathit{Hom}^{*-1}_+(\epsilon_{(L,\rho_1)}, \epsilon_{(L,\rho_2)}).
\]
  Moreover, as in \cite{Ekh} and \cite[Section 6.2]{DR}, the intersection points $L_1 \cap L_2$ are in grading preserving bijection with $\mathit{Crit}(h)$ and the holomorphic strips, $[u] \in \mathcal{M}(x; \alpha, y, \beta)$, used in the definition of $d_0$ are in (orientation sign preserving) bijection with ascending gradient trajectories of $h$ from $y$ to $x$, $[\eta] \in \mathcal{M}_h(x,y; \beta)$.  Under this bijection, the gradient trajectory, $\eta_u$, from $y$ to $x$ corresponding to a strip $u$ is homotopic to $u_{[y,x]}$. Hence, when $\eta_u^{-1}$ is concatenated with basepoint paths it becomes $\alpha \simeq \beta^{-1}$. Thus,
\[
d_0y = \sum \#\mathcal{M}(x; \alpha, y, \beta) \rho_1(\beta) \cdot \rho_2(\alpha) x = \sum \#\mathcal{M}_h(x, y;  \alpha) \rho_1(\alpha^{-1}) \cdot \rho_2(\alpha) x 
\]
is exactly the differential from the Morse complex $C^*(L, h; \rho_1^t \otimes \rho_2)$. (See \cite[Section 5.2]{EES07} for the comparison of coherent orientations of the moduli spaces of disks and gradient trajectories.)  In summary, we have
\[
C_0^{*}((L_1, \rho_1),(L_2, \rho_2)) = C^*(L, h; \rho_1^t \otimes \rho_2),
\] 
so that $\phi$ provides the required isomorphism (\ref{eq:Seidel}) completing the proof.
\end{proof}

\begin{remark}[Higher rank] 
The proof extends with a small adjustement to establish the isomorphism 
\[
H^*Hom_+(\phi_{(L, \rho_1)}, \phi_{(L, \rho_2)}) \cong H^*(L,h; \rho_1^t \otimes \rho_2)
\]
where $\rho_i:\pi_1(L, x_0) \rightarrow GL(V_i)$ are higher rank local systems and $\phi_{(L, \rho_i)} = \rho_i \circ f_L: \mathcal{A}(\Lambda) \rightarrow \mathit{End}(V_i)$ are the induced representations.  Indeed, in place of equation (\ref{eq:CF}), form
\[
CF^*((L_1, \rho_1),(L_2, \rho_2)) :=   (V_1^\vee \otimes_\mathbb{F} V_2) \otimes_S CF^*(L_1,L_2;\pi_1)
\]
where $V_1^\vee$ is the vector space dual to $V_1$ and $V_1^\vee \otimes_\mathbb{F} V_2$ is a right $S$-module via
\[
(\xi_1 \otimes v_2)(\beta \otimes \alpha) = (\rho_1(\beta)^\vee \xi_1) \otimes (\rho_2(\alpha) v_2).  
\]
The isomorphism then follows as above by computing the new $d_\infty$ and $d_0$ and comparing with (\ref{eq:LinHigher}) and (\ref{eq:MorseHigher}).
\end{remark}

At this point we can prove the Proposition \ref{prop:main2}.

\begin{proof}[Proof of Proposition \ref{prop:main2}]  That $f^*_L$ is algebraic is clear since $f_L$ is a ring homomorphism. 
To prove injectivity of $f_L^*$ under the assumption that $m(L)=0$, suppose that $\rho_1 \neq \rho_2$ are distinct local systems in $\mathit{Aug}(L,\mathbb{F})$.  From equation (\ref{eq:rho}) we see that $\rho^t_1 \otimes \rho_1$ is the trivial local system, but $\rho_1^t \otimes \rho_2$ is not.  Then, applying Propositions \ref{prop:Seidel} and \ref{prop:standard} we see that 
\[
H^{0}Hom_+(\epsilon_{(L, \rho_1)}, \epsilon_{(L, \rho_1)}) \not \cong H^{0}Hom_+(\epsilon_{(L, \rho_1)}, \epsilon_{(L, \rho_2)}).
\]
Certainly, this shows that $\epsilon_{(L, \rho_1)} \neq \epsilon_{(L, \rho_2)}$.  

In the case where $\dim L =2$, the Morse function $h:L \rightarrow \R$ can be chosen to only have critical points of index $0$ and $1$.  Thus, under the weaker assumption that $m(L)=m$ is even,  Proposition \ref{prop:Seidel} provides the isomorphism
\begin{align*}
H^0(L, h; \rho_1^t \otimes \rho_2) &= H^{\mathit{even}}(L, h; \rho_1^t \otimes \rho_2) \cong H^{\mathit{even}}Hom_+(\epsilon_{(L, \rho_1)}, \epsilon_{(L, \rho_2)}), 
\end{align*}
so that $\epsilon_{(L, \rho_1)}$ and $\epsilon_{(L, \rho_2)}$ are distinct when $\rho_1 \neq \rho_2$.  Moreover, $H^{\mathit{even}}Hom_+(\epsilon_{(L, \rho_1)}, \epsilon_{(L, \rho_1)}) \not \cong H^{\mathit{even}}Hom_+(\epsilon_{(L, \rho_1)}, \epsilon_{(L, \rho_2)})$  shows $\epsilon_{(L, \rho_1)}$ and $\epsilon_{(L, \rho_2)}$ are not isomorphic in the ($m$-graded version of the) positive augmentation category, $\mbox{Aug}_+(\Lambda, \mathbb{F})$, so \cite[Proposition 5.19]{NRSSZ} shows that they are not DGA homotopic either.
\end{proof}

\section{Augmentations without algebraic tori neighborhoods}  \label{sec:lambda}

This section contains examples of augmentations obstructed from having any orientable Lagrangian filling via Proposition \ref{prop:main2}.  First, we consider the specific case of $\Lambda_n$ in \ref{sec:lambdan}, and then observe the extension to general negative twist knots in \ref{sec:general}. In \ref{sec:formality} we discuss in more detail the  Ekholm-Lekili obstruction to fillings arising from $A_\infty$ structures as in \cite{EL,Etgu} and establish that it does not apply to the augmentations of $\Lambda_n$ that we consider.

\subsection{Augmentations of $\Lambda_n$} \label{sec:lambdan}
Recall now the family of Legendrian knots, $\Lambda_{n}$, presented in Figure \ref{fig:Lambda} where $n =2k+1\geq 3$ is an odd integer.  Using the Ng resolution procedure \cite{Ng}, the Reeb chords of $\Lambda_n$ are: 
\begin{center}
\renewcommand{\arraystretch}{1.2} 
\begin{tabular}{c|c|c}

Degree & $0$ & $1$ \\
\hline 
Reeb Chords & $a,b, c_1, \ldots, c_n$ & $e_0, e_1, \ldots, e_n$
\end{tabular}
\end{center} 
Let $t$ denote the generator of   $\Z[\pi_1(\Lambda_n, x_0)]$, and choose all basepoint paths  to avoid the right cusp $e_0$.  Using the sign convention from \cite[Section 2.1]{LeRu}, the differentials in $\mathcal{A}(\Lambda_n)$ are
\begin{align}
\label{eq:1} \partial e_0 & = t^{-1} +c_n(1+ab) \\
\label{eq:2} \partial e_1 & = 1 + (1+ba)(-c_1) + \left\{\begin{array}{cr} (-b)(1+c_2c_3), & n=3  \\ 0, & n>3 \end{array} \right.\\
\label{eq:3} \partial e_j & = 1+c_{j-1}c_j, \quad \quad \quad 2\leq j \leq k+1 \\
\label{eq:4} \partial e_j & = 1+c_{j}c_{j-1}, \quad \quad \quad k+2 \leq j \leq n. 
\end{align}
Note that since all Reeb chords have degree $0$ or $1$, any even graded augmentation is automatically $0$-graded.  I.e., for any even $m$, the $m$-graded augmentation variety has
\[
\mathit{Aug}_m(\Lambda_n,\mathbb{F}) = \mathit{Aug}(\Lambda_n,\mathbb{F}).  
\]  

\begin{proposition} \label{prop:variety}
For any odd $n\geq 3$ and any field $\mathbb{F}$, there is an algebraic isomorphism
\[
\mathit{Aug}(\Lambda_n,\mathbb{F}) \cong V:= \{(a,b)\in \mathbb{F}^{2}\,|\,  ab \neq -1\}.
\]
Moreover, for any $\epsilon \in \mathit{Aug}(\Lambda_n, \mathbb{F})$ the linearized cohomology ring, $H^*Hom_+(\epsilon,\epsilon)$, is isomorphic  to $H^*(T^2-\{y_0\}; \mathbb{F})$ of a punctured torus.   
\end{proposition}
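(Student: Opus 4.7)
The proof has two parts. For the augmentation variety, I use that every Reeb chord of $\Lambda_n$ has degree $0$ or $1$, so any augmentation $\epsilon$ is automatically $0$-graded and must send $e_0, \ldots, e_n$ to zero. The equations $\epsilon \circ \partial e_j = 0$ then reduce to a system in $\epsilon(a), \epsilon(b), \epsilon(c_j) \in \mathbb{F}$ and $\epsilon(t) \in \mathbb{F}^*$. Relations (\ref{eq:3})--(\ref{eq:4}) yield $\epsilon(c_{j-1})\epsilon(c_j) = -1$ for $2 \leq j \leq n$, propagating the value of $\epsilon(c_n)$ to all $\epsilon(c_j)$; for odd $n$ this forces $\epsilon(c_1) = \epsilon(c_n)$. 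Equation (\ref{eq:1}) requires $\epsilon(a)\epsilon(b) \neq -1$ and expresses $\epsilon(c_n) = -\epsilon(t)^{-1}/(1 + \epsilon(a)\epsilon(b))$. Equation (\ref{eq:2}), using $\epsilon(1 + c_2 c_3) = 0$ when $n = 3$, reduces to $\epsilon(c_1)(1 + \epsilon(a)\epsilon(b)) = 1$. These combined constraints force $\epsilon(t) = -1$, so the projection $\epsilon \mapsto (\epsilon(a), \epsilon(b))$ becomes an algebraic bijection $\mathit{Aug}(\Lambda_n, \mathbb{F}) \to V$.

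For the linearized cohomology, the plan is to compute $H^*\mathit{Hom}_+(\epsilon, \epsilon)$ directly in the perturbed two-copy model of Section \ref{sec:positive}. With a Morse function $F: \Lambda_n \cong S^1 \to \mathbb{R}$ having one minimum $x$ and one maximum $y$, the complex $\mathit{Hom}_+^*(\epsilon, \epsilon)$ has generators concentrated in three degrees: $x$ in degree $0$; the generator $y$ together with the degree-$0$ Reeb chords $a, b, c_1, \ldots, c_n$ in degree $1$; and the degree-$1$ Reeb chords $e_0, \ldots, e_n$ in degree $2$. Its Euler characteristic is $1 - (n+3) + (n+1) = -1$, matching $\chi(T^2 - \{y_0\})$. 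The minimum $x$ is a cycle representing the unit of the unital $A_\infty$-category $\mathit{Aug}_+(\Lambda_n, \mathbb{F})$ of \cite{NRSSZ}, so $H^0 \cong \mathbb{F}$. The key step is then to show that $d : \mathit{Hom}_+^1 \to \mathit{Hom}_+^2$ has rank $n+1$: linearizing (\ref{eq:1})--(\ref{eq:4}) at $\epsilon$, plugging in the values $\epsilon(c_j)$ and $\epsilon(t) = -1$ from Part 1, and incorporating the mixed Morse-Reeb disk contributions involving $y$, one verifies surjectivity and so obtains Betti numbers $(1, 2, 0)$.

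The ring structure then follows from the dimension count: since $H^2 = 0$, all cup products of degree-$1$ classes vanish; since $[x] \in H^0$ is the unit, it acts as the identity. Hence $H^*\mathit{Hom}_+(\epsilon, \epsilon)$ is the graded ring $\mathbb{F} \oplus \mathbb{F}^2 \oplus 0$ with trivial multiplication on the positive part, which agrees with $H^*(T^2 - \{y_0\}; \mathbb{F})$.

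The main obstacle is the rank computation. Using only the pure Reeb-chord part of the differential yields a matrix whose rows indexed by $e_0, \ldots, e_n$ admit an explicit linear dependence (traceable to $\epsilon(c_1) = \epsilon(c_n)$ for odd $n$ together with the alternating pattern of the $\epsilon(c_j)$), giving rank only $n$. The mixed moduli spaces involving the Morse maximum $y$ must therefore contribute a new row that breaks this dependence uniformly over $(a,b) \in V$, restoring the full rank $n+1$ required for $H^2 = 0$. Identifying these contributions explicitly and checking that they do indeed break the dependence is the principal technical step of Part 2.
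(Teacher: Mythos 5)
Your Part 1 is essentially the paper's argument: solve $\partial e_j=0$ explicitly, finding $\epsilon(c_j)$ alternating between $c_1=1/(1+ab)$ and $-(1+ab)$, and $\epsilon(t)=-1$, so that projection gives an algebraic bijection onto $V$ (realized as the affine set $\{(a,b,c): (ab+1)c=1\}$). This part is fine.

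Part 2, however, has a genuine gap, and you name it yourself: the whole computation hinges on showing that $d:\mathit{Hom}_+^1(\epsilon,\epsilon)\to \mathit{Hom}_+^2(\epsilon,\epsilon)$ has rank $n+1$ (equivalently $H^2=0$), and you observe that the Reeb-chord block of this differential only has rank $n$, so everything rests on the unidentified contribution of the Morse maximum $y$ breaking the remaining linear dependence uniformly over $V$. You never write down $dy$ or verify this, so the Betti number claim $(1,2,0)$ --- and with it the ring identification --- is not established. Asserting that the mixed Morse--Reeb disks ``must'' supply the missing rank is circular: that is precisely the content of duality, not something you may assume. The paper sidesteps this computation entirely: Sabloff duality \cite{Sab} gives $\dim_{\mathbb F} LCH^{\epsilon}_1(\Lambda_n)=\dim_{\mathbb F} LCH^{\epsilon}_{-1}(\Lambda_n)+1=1$ (there are no degree $-1$ chords), the Euler characteristic $\mathit{tb}(\Lambda_n)=1$ then forces $\dim_{\mathbb F} LCH^{\epsilon}_0(\Lambda_n)=2$, and the isomorphism $H^*\mathit{Hom}_+(\epsilon,\epsilon)\cong LCH^{\epsilon}_{1-*}(\Lambda_n)$ of \cite[Corollary 5.6]{NRSSZ} converts this into $\dim H^0=1$, $\dim H^1=2$, $H^2\cong LCH^{\epsilon}_{-1}=0$, with no matrix analysis and no dependence on the point of $V$. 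To repair your argument you should either invoke this duality package as the paper does, or genuinely carry out the two-copy computation: take the explicit formula for the differential of $y$ (and the $y$-terms in the differentials of the chord copies) from the perturbed two-copy model of \cite[Section 4--5]{NRSSZ}, evaluate it at the augmentations of Proposition \ref{prop:variety}, and check surjectivity for every $(a,b)\in V$; as stated, that principal step is missing. Your final observation that the graded dimensions plus the unit determine the ring structure is the same as the paper's and is fine once the Betti numbers are actually known.
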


\begin{proof}
As explicit algebraic sets,
\[
\mathit{Aug}(\Lambda_n,\mathbb{F}) = \{(a,b,c_1, \ldots, c_n,t) \in \mathbb{F}^{n+2}\times \mathbb{F}^*\,|\, \partial e_j = 0, \, 0 \leq j \leq n\} 
\]
 and
\[
V=\{(a,b,c) \in \mathbb{F}^3\,|\, (ab+1)c =1\}.
\]
Solving the equations $\partial e_j = 0$ using (\ref{eq:1})-(\ref{eq:4}) one sees that the projection to the first three coordinates is an algebraic bijection 
\[
\begin{array}{ccc}
\mathit{Aug}(\Lambda_n,\mathbb{F}) & \stackrel{\cong}{\rightarrow} & V \\
(a,b,c_1, \ldots, c_n,t) & \mapsto & (a,b, c_1)
\end{array}
\]
with algebraic inverse
\begin{align*}
(a,b, c) & \mapsto (a,b, c_1, \ldots, c_n,t)  \\
c_j &= \left\{\begin{array}{cr} c & \mbox{$j$ odd}, \\ -(ab+1) & \mbox{$j$ even} \end{array} \right. \\
t &= -1.
\end{align*}

To verify the statement about $H^*Hom_+(\epsilon,\epsilon)$, note that by the Sabloff Duality Theorem, \cite{Sab}, the ordinary linearized homology of $\Lambda_n$ must have $\mathit{dim}_\mathbb{F}LCH^\e_1(\Lambda_n) = 1$ (since $\mathit{dim}_\mathbb{F}LCH^\e_1(\Lambda_n) = \mathit{dim}_\mathbb{F}LCH^\e_{-1}(\Lambda_n)+1$ and there are no Reeb chords in degree $-1$).  As the Euler characteristic is $\mathit{tb}(\Lambda_n) = 1$, we have $\mathit{dim}_\mathbb{F}LCH^\e_0(\Lambda_n) = 2$.  Now, using the isomorphism $H^*\mathit{Hom}_+(\epsilon, \epsilon) \cong LCH^\e_{1-*}(\Lambda)$ from \cite[Corollary 5.6]{NRSSZ} we have $\dim_\F H^0\mathit{Hom}_+(\epsilon, \epsilon)=1$ and  $\dim_\F H^1\mathit{Hom}_+(\epsilon, \epsilon)=2$ agreeing with $H^*(T^2-\{y_0\};\mathbb{F})$.  Moreover, since both $H^*\mathit{Hom}_+(\epsilon, \epsilon)$ and $H^*(T^2-\{y_0\}; \mathbb{F})$ have an identity element in degree $0$ the grading determines the ring structures.  
\end{proof}

\begin{remark}  

Obstructions (1)-(3) discussed in the introduction do not apply to $\epsilon_3$.

\begin{enumerate}
\item $\mathit{tb}(\Lambda_n) =1$, so a genus $1$ orientable filling $L$ would be allowed by (1) since it would have $2g(L)-1=1$.

\item $LCH^{\e_3}_{*}(\Lambda) \cong H^{1-*}\mathit{Hom}_+(\epsilon_3, \epsilon_3) \cong H^{1-*}(T^2-\{y_0\}; \mathbb{F}) \cong H_{1+*}(T^2-B^2, \partial( T^2-B^2); \mathbb{F})$.

\item The $A_\infty$-algebra $\text{RHom}_{CE^*}(\epsilon_3,\epsilon_3)$ is isomorphic to the cochain complex of the torus. See section \ref{sec:formality} for the notation and details. Hence the Ekholm-Lekili obstruction does not apply.

\end{enumerate}

\end{remark}

\begin{proposition} \label{prop:V}
Let $k$ be an algebraically closed field, and $V = \{(a,b) \in k \,|\, ab \neq -1\}$.  There is no injective algebraic map 
\[
\phi:(k^*)^2 \rightarrow V
\]
having $(0,0)$ in its image.
\end{proposition}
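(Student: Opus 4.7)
The plan is to exploit the fact that $1 + ab$ is a unit of $\mathcal{O}(V) = k[a, b, (1 + ab)^{-1}]$. Writing $\phi = (f, g)$ with $f, g \in k[x^{\pm 1}, y^{\pm 1}]$, the element $1 + fg = \phi^*(1 + ab)$ must be a unit of the Laurent polynomial ring, so $1 + fg = \lambda x^m y^n$ for some $\lambda \in k^*$ and $(m, n) \in \Z^2$. From here the argument splits on the shape of this monomial and reduces to a factorization analysis in the UFD $k[x^{\pm 1}, y^{\pm 1}]$.

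First I would dispose of the degenerate case $(m, n) = (0, 0)$, where $fg = \lambda - 1 \in k$. If $\lambda = 1$ then $fg = 0$ in the domain $k[x^{\pm 1}, y^{\pm 1}]$, so one of $f$ or $g$ vanishes identically, forcing the image of $\phi$ into a coordinate line of $V$ and contradicting injectivity on the two-dimensional torus. If $\lambda \neq 1$ then both $f$ and $g$ are monomial units of the form $c\, x^p y^q$, which are nowhere zero on $(k^*)^2$, so $(0, 0)$ is not in the image of $\phi$.

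In the main case $(m, n) \neq (0, 0)$, I set $d = \gcd(m, n)$, write $m = d m_1$, $n = d n_1$ with $\gcd(m_1, n_1) = 1$, and let $z = x^{m_1} y^{n_1}$. Factoring $\lambda z^d - 1$ over the algebraically closed field $k$ (using Frobenius in characteristic $p$ when $p \mid d$), one obtains
\[
fg = \lambda x^m y^n - 1 = c \prod_{i=1}^{d'} (x^{m_1} y^{n_1} - \beta_i)^e,
\]
where the $\beta_i$ are distinct elements of $k^*$ and $e \geq 1$ equals $1$ when $\Char k \nmid d$ and equals the largest power $p^a$ of $p = \Char k$ dividing $d$ otherwise. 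Because $\gcd(m_1, n_1) = 1$, each polynomial $x^{m_1} y^{n_1} - \beta_i$ is irreducible in $k[x^{\pm 1}, y^{\pm 1}]$, and distinct $\beta_i$ yield non-associate irreducibles. The UFD property then forces $f$ and $g$ to be monomial units times sub-products of these irreducibles, with the multiplicities combining to $e$ at each $\beta_i$.

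The hypothesis $\phi(x_0, y_0) = (0, 0)$ forces $f(x_0, y_0) = g(x_0, y_0) = 0$. Monomial units do not vanish on the torus, so there is a unique $\beta_{i_0} = x_0^{m_1} y_0^{n_1}$ whose multiplicity $e_{i_0}$ in $f$ must satisfy both $e_{i_0} > 0$ and $e - e_{i_0} > 0$. When $e = 1$ this is impossible, finishing the proof in characteristic zero and whenever $\Char k \nmid d$. When $e = p^a \geq 2$, both $f$ and $g$ carry the factor $x^{m_1} y^{n_1} - \beta_{i_0}$, so both vanish identically on the irreducible $1$-dimensional curve $C = \{x^{m_1} y^{n_1} = \beta_{i_0}\} \cap (k^*)^2$; then $C \subseteq \phi^{-1}((0, 0))$ violates injectivity. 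The main obstacle is precisely this positive-characteristic situation, where the Frobenius-induced multiplicity $p^a$ is what makes common factors in $f$ and $g$ possible, and one must invoke the fiber-dimension argument rather than a naive factor count.
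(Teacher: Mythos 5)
Your proof is correct, and it follows the same overall strategy as the paper's (pull back $1+ab$ to a unit of $k[x^{\pm1},y^{\pm1}]$, hence a monomial; split off the degenerate constant case; then a UFD/common-factor analysis of $fg=\lambda x^m y^n-1$ with a Frobenius extraction in characteristic $p$), but the execution differs in a way worth noting. You stay inside the Laurent ring and, after extracting $d=\gcd(m,n)$, factor $\lambda x^my^n-1$ explicitly into powers of the primitive binomials $x^{m_1}y^{n_1}-\beta_i$; this makes the case $\Char k\nmid d$ an immediate multiplicity count ($e=1$ cannot split between $f$ and $g$), and in the remaining case the shared binomial factor forces $f$ and $g$ to vanish simultaneously on an infinite curve in $(k^*)^2$, so the fiber of $\phi$ over $(0,0)$ is infinite. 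The paper instead clears denominators to work in $k[s,t]$, uses separability of $(\alpha t_0^n)s^m-1$ along a line both for the characteristic-zero case and to show distinct irreducible factors of $\alpha's^{m'}t^{n'}-1$ have no common zero, and then must rule out, by hand, an irreducible polynomial whose zero set lies in the coordinate axes union a single point---a step your argument avoids entirely because your common factor is known explicitly and its zero locus is visibly an infinite coset of a subtorus. The one statement you use without justification is that $x^{m_1}y^{n_1}-\beta$ with $\gcd(m_1,n_1)=1$, $\beta\in k^*$, is irreducible in $k[x^{\pm1},y^{\pm1}]$ and that distinct $\beta$ give non-associate factors; this is standard but should be recorded, e.g.\ by applying the monomial change of coordinates attached to a matrix in $GL_2(\Z)$ taking $(m_1,n_1)$ to $(1,0)$, which turns the binomial into $x-\beta$, whose quotient ring $k[y^{\pm1}]$ is a domain (the same fact gives that the curve $\{x^{m_1}y^{n_1}=\beta_{i_0}\}$ in $(k^*)^2$ is irreducible of dimension one, hence infinite). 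With that line added, your argument is complete and arguably cleaner than the published one.
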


\begin{proof}
Suppose $\phi$ is injective with $\phi(s_0,t_0) = 0$, and view 
\[
V = \{(a,b,c) \in k^3\,|\, (ab+1)c =1\}.
\]
The homomorphism of coordinate rings associated to $\phi$ is a ring homomorphism
\[
\phi^*:k[a,b,c]/\left( (ab+1)c-1\right) \rightarrow k[s^{\pm1},t^{\pm1}]. 
\]
Writing
\[
A(s,t) = \phi^*a, \quad B(s,t) = \phi^*b, \quad C(s,t) = \phi^*c,
\]
we have 
\[
(AB+1)C =1.
\]
Thus, $AB+1$ is a unit in $k[s^{\pm1},t^{\pm1}]$ and hence has the form $\alpha s^mt^n$ for some $\alpha \in k^*$ and $m,n \in \Z$ so that  
\begin{equation} \label{eq:ABequation}
A(s,t) B(s,t) = \alpha s^mt^n -1.
\end{equation}
Moreover, since $\phi(s_0,t_0) = (0,0)$, we get  $A(s_0,t_0) = B(s_0,t_0)=0$.  By modifying $A$ and $B$ via a substitution of the form $s \mapsto s^{\pm1}$ and $t \mapsto t^{\pm1}$, we can assume $m,n \geq 0$.

\medskip

\noindent{\bf Case 1.}  $m=n = 0$.

\medskip

We must have $\alpha = 1$, since otherwise $AB$ would not have any zero.  Thus, $AB = 0$ which implies that one of $A$ or $B$ is $0$.  Supposing $A=0$, we have
\[
\phi(s,t) = (0, B(s,t)),
\]
and this contradicts the injectivity of $\phi$.  
\medskip

\noindent{\bf Case 2.}  $m>0$ or $n>0$, i.e. $\alpha s^mt^n -1 \in k[s,t]$ is non-constant.

\medskip

To work in $k[s,t]$ rather than $k[s^{\pm1},t^{\pm1}]$,  we put
\[
\widetilde{A} = s^{-m(A,s)}t^{-m(A,t)}A, \quad \widetilde{B} = s^{-m(B,s)}t^{-m(B,t)}B
\]
where $m(A,s)$ and $m(A,t)$ denote the minimum degree of $A$ in $s$ and $t$ respectively, and similarly for $B$. Note that $m(A,s) + m(B,s) = 0$, because the right hand side of (\ref{eq:ABequation}) regarded as a Laurent polynomial in $s$ has minimum degree $0$. Similarly $m(A,t) + m(B,t) = 0$.
We then have  
\[
\widetilde{A}\,\widetilde{B}=\alpha s^mt^n -1, \quad \widetilde{A}, \widetilde{B} \in k[s,t],
\]
and the zero sets satisfy 
\[
Z(\widetilde{A}) \cap (k^*)^2 = Z(A) \cap (k^*)^2 \quad \mbox{and} \quad  Z(\widetilde{B}) \cap (k^*)^2 = Z(B) \cap (k^*)^2. 
\]
By the injectivity of $\phi$, we moreover have
\begin{equation} \label{eq:Z}
Z(\widetilde{A}) \cap Z(\widetilde{B}) \cap (k^*)^2 = \phi^{-1}(\{(0,0)\}) = \{(s_0,t_0)\}.
\end{equation} 

\medskip

\noindent {\bf SubCase 2A.}  $\Char k = 0$. One of $m$ or $n$ is strictly positive; without loss of generality, suppose $m>0$.  We have
\[
\widetilde{A}(s,t_0) \widetilde{B}(s,t_0) = (\alpha t_0^n)s^m  -1,
\]
but this is a contradiction since $s_0$ would be a multiple root and $(\alpha t_0^n)s^m  -1 \in k[s]$ is seperable, i.e. has no multiple roots, since it is relatively prime to its formal derivative:
\[
(-1)((\alpha t_0^n)s^m  -1) + \left(\frac{s}{m}\right)((m \alpha t_0^n)s^{m-1}) = 1.
\]

\medskip

\noindent {\bf SubCase 2B.}  $\Char k= p >0$.  Take $\ell \geq 0$ such that $m = p^\ell m'$ and $n = p^\ell n'$ with $m', n' \in \Z$ and either $(p, m') =1$ or $(p, n') = 1$, and let $\alpha' \in k$ have $(\alpha')^{p^\ell} = \alpha$.  With no loss of generality assume $(p,m') =1$.  Then, using that $k[s,t]$ is UFD we write
\begin{align*}
\widetilde{A} \, \widetilde{B} \, =\, (\alpha's^{m'}t^{n'} -1)^{p^\ell} \,=\, (q_1q_2\ldots, q_r)^{p^\ell}
\end{align*}
where $q_1, \ldots, q_r$ are the irreducible factors of $\alpha's^{m'}t^{n'} -1$ in $k[s,t]$.    

\medskip

\noindent{\bf Claim.}  No two of the $q_i$ and $q_j$ with $i\neq j$ can have a common zero in $k^2$.

\medskip

To verify the claim indirectly, assume that $(s_1,t_1)$ is a common zero of $q_i$ and $q_j$.  Then, $s_1$ is a root of multiplicity $\geq 2$ for 
\[
g(s) = (\alpha' t_1^{n'})s^{m'} -1 = q_1(s,t_1) \cdots q_r(s,t_1) \in k[s],
\]
 contradicting that $g(s)$ is seperable.  [Since $(m',p) =1$, the same formal derivative computation as above goes through.]

Now, $\widetilde{A}$ and $\widetilde{B}$ do have the common zero $(s_0,t_0) \in (k^*)^2$, so the claim shows there must be some $q_i$ such that
\[
q_i \,|\, \widetilde{A},  \quad  \quad q_i\,|\, \widetilde{B}, \quad \mbox{and} \quad q_i(s_0,t_0) = 0.
\]
In view of (\ref{eq:Z}), this $q_i \in k[s,t]$ is then an irreducible polynomial with
\[
\{(s_0,t_0)\}\subset Z(q_i) \subset (k\times\{0\})\cup (\{0\}\times k) \cup  \{(s_0,t_0)\}.
\]
No such polynomial can exist.  [To see this, note that $Z(q_i) \cap (k\times\{0\})$ is a Zariski closed subset of $k\times\{0\}$ and hence is finite or $k\times\{0\}$.  It cannot be all of $k\times\{0\}$ since in this case $s \,|\, q_i$, and as $q_i$ is irreducible we would have $q_i = \beta s$ for some $\beta \in k^*$.  This would imply $q_i(s_0,t_0) \neq 0$, a contradiction.  Similarly, $Z(q_i) \cap (\{0\} \times k)$ is finite.  Thus, $Z(q_i)$ itself must be finite, but this is impossible since $q_i$ is non-constant and $k$ is algebraically closed.]  
\end{proof}

\begin{corollary} \label{cor:main} Let $n\geq 3$ be odd.
\begin{enumerate}
\item  There is no orientable Lagrangian filling inducing the augmentation $\epsilon_3 \in \mathit{Aug}(\Lambda_n, \mathbb{F}_2)$ with $\epsilon_3(a) = \epsilon_3(b) = 0$.
\item  There is no pair $(\Lambda, \rho)$ consisting of an orientable Lagrangian filling with local system $\rho:\pi(\Lambda) \rightarrow \mathbb{F}^*$ that induces the augmentation $\epsilon \in \mathit{Aug}(\Lambda_n, \mathbb{F})$ with $\epsilon(a) = \epsilon(b) = 0$.  
\end{enumerate}
\end{corollary}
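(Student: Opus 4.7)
The plan is to argue by contradiction: assume an orientable Lagrangian filling $L$ equipped with a rank-$1$ local system $\rho: \pi_1(L) \to \mathbb{F}^*$ induces the augmentation in question, and combine Propositions \ref{prop:main2}, \ref{prop:variety}, and \ref{prop:V} to derive a contradiction. Part (1) is the special case $\mathbb{F} = \mathbb{F}_2$ with $\rho$ forced to be trivial (since $\mathbb{F}_2^* = \{1\}$), so in principle it suffices to treat part (2), though both cases will be handled uniformly by the same argument.

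First I would pin down the source and target of $f_L^*$. Since $\Lambda_n \subset \mathbb{R}^3$ and $L$ is orientable, the Maslov number $m(L) = m$ is even and $H_1(L)$ is torsion-free. Chantraine's equality $\mathit{tb}(\Lambda_n) = 2g(L) - 1$ together with $\mathit{tb}(\Lambda_n) = 1$ forces $g(L) = 1$, so $\mathit{Aug}(L, \mathbb{F}) \cong (\mathbb{F}^*)^2$. Since all Reeb chords of $\Lambda_n$ lie in degrees $0$ and $1$, the $m$-graded augmentation variety coincides with the ordinary augmentation variety, and Proposition \ref{prop:main2} provides an injective algebraic map $f_L^*: (\mathbb{F}^*)^2 \hookrightarrow \mathit{Aug}(\Lambda_n, \mathbb{F})$ whose image contains $\epsilon$. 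Composing with the isomorphism $\mathit{Aug}(\Lambda_n, \mathbb{F}) \cong V(\mathbb{F})$ from Proposition \ref{prop:variety}, the point corresponding to $\epsilon$ is $(0,0) \in V$ because $\epsilon(a) = \epsilon(b) = 0$.

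The only remaining step is to invoke Proposition \ref{prop:V}, which requires an algebraically closed ground field. I would handle this by base change: viewing $\rho$ as taking values in $\overline{\mathbb{F}}^*$ via the natural inclusion $\mathbb{F}^* \hookrightarrow \overline{\mathbb{F}}^*$ produces from the same filling $L$ an augmentation valued in $\overline{\mathbb{F}}$ still sending $a, b$ to $0$. All constructions in Propositions \ref{prop:main2} and \ref{prop:variety} work over any ground field, so the preceding argument runs verbatim over $\overline{\mathbb{F}}$ and yields an injective algebraic map $(\overline{\mathbb{F}}^*)^2 \hookrightarrow V(\overline{\mathbb{F}})$ with $(0,0)$ in its image. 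This directly contradicts Proposition \ref{prop:V} and finishes the argument. There is no essential obstacle in this corollary: all of the substantive work is already packaged into Propositions \ref{prop:main2} and \ref{prop:V}, and what remains is essentially bookkeeping together with the extension-of-scalars step.
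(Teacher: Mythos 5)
Your proposal is correct and follows essentially the same route as the paper: extend scalars along $\mathbb{F}\subset\overline{\mathbb{F}}$, use $\mathit{tb}(\Lambda_n)=1=2g(L)-1$ and orientability to get an injective algebraic map $(\overline{\mathbb{F}}^*)^2\hookrightarrow \mathit{Aug}(\Lambda_n,\overline{\mathbb{F}})\cong V$ from Proposition \ref{prop:main2} hitting $(0,0)$, contradicting Proposition \ref{prop:V}. The only cosmetic difference is that you explicitly reduce part (1) to part (2) via the trivial local system, which the paper does implicitly.
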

\begin{proof}
If such a $(L, \rho)$ exists, then it also  induces $\e$ in $\mathit{Aug}(\Lambda_n, \overline{\mathbb{F}})$ where $\overline{\mathbb{F}}$ denotes the algebraic closure of $\mathbb{F}$.  [Just extend the codomain of $\rho$ via the inclusion $\mathbb{F} \subset \overline{\mathbb{F}}$.]  Since $1= \mathit{tb}(\Lambda_n) = 2g(L) -1$, we must have $\dim H_1(L) = 2$.  Moreover, since $L$ is orientable $m=m(L)$ is even, and Proposition \ref{prop:main2} shows $(0,0)\in \mathit{Aug}_m(\Lambda_n, \overline{\mathbb{F}}) =\mathit{Aug}(\Lambda_n, \overline{\mathbb{F}}) \cong V$ would then be in  the image of the injective algebraic map  $f_L^*:(\overline{\mathbb{F}}^*)^2 \rightarrow V$.  This contradicts Proposition \ref{prop:V}. 
\end{proof}

\begin{remark}
In contrast, the augmentations $\epsilon_1, \epsilon_2 \in \mathit{Aug}(\Lambda_n, \F_2)$ from the introduction can be induced by Lagrangian fillings with $m(L) = 0$.  See Figure \ref{fig:Filling1} which also demonstrates an immersed Lagrangian disk filling with a single double point that contains $\epsilon_3$ in its induced augmentation set. 
\end{remark}

\begin{figure}[!ht]
\labellist
\small
\pinlabel $L_1$ [t] at 90 -2
\pinlabel $L_2$ [t] at 352 -2
\pinlabel $L_3$ [t] at 614 -2

\endlabellist
\includegraphics[scale=.6]{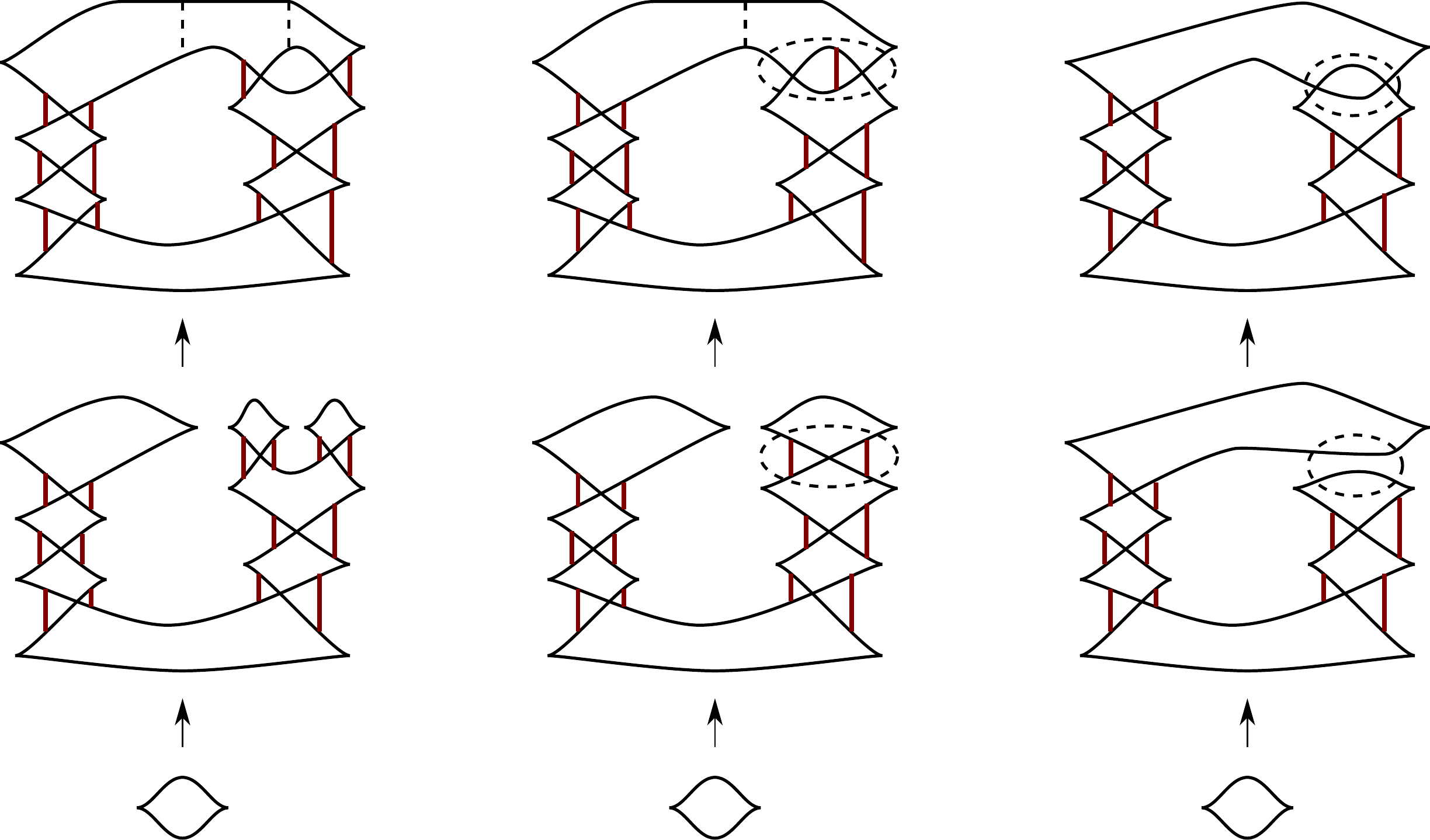}
\vspace{0.7cm}
\caption{Lagrangian fillings $L_1, L_2,L_3$ inducing the augmentations $\e_1, \e_2, \e_3$ respectively.  The dotted vertical lines indicate the location of pinch moves.  For $L_2$ a $D_4^-$ move occurs in the circled location.  For $L_3$ a clasp move occurs in the circled location, resulting in a double point of the filling.  The induced augmentations may be computed using the method of \cite[Section 6]{PanRu2}, and the computation is indicated by the solid red vertical lines that form the handleslides for a $2$-dimensional Morse complex family for each of the fillings.  Those crossings, $x$, of $\Lambda_n$ that end up with a handleslide positioned immediately to their left  have $\epsilon(x) =1$.  Alternatively, one can use \cite{EHK} to compute the augmentations induced by $L_1$ and $L_2$.}
\label{fig:Filling1}
\end{figure}

\subsection{Formality}\label{sec:formality}
We show in Proposition \ref{ELformal} that in characteristic $2$ the $A_\infty$ algebra $B = \mathrm{RHom}_{CE^*}(\epsilon_3,\epsilon_3)$ (this is denoted by $\mbox{RHom}_{CE^*}(\mathbb{K},\mathbb{K})$ in \cite{EL,Etgu}) is formal, and isomorphic to the cochain complex of the torus. As discussed in \cite[Section 3]{Etgu} a consequence of \cite[Theorem 4]{EL} is that, when $\e$ is induced by a Lagrangian filling, $\mathrm{RHom}_{CE^*}(\epsilon,\epsilon)$ is quasi-isomorphic as an $A_\infty$ algebra to the cochain complex of the filling, capped off by a disk. Therefore, Proposition \ref{ELformal} shows that this obstruction of Ekholm-Lekili  does not apply to $\e_3$. In our setup, the strictly unital $A_\infty$ algebra $B$ can be obtained from the non-unital $A_\infty$ algebra, $\mathit{LCH}^{*-1}_{\e_3}(\Lambda_n) =\mathit{Hom}^{*}_-(\e_3,\e_3)$ defined in \cite{CEKSW} (that is also the hom-space in the negative augmentation category from \cite{BC}), by adding a copy of $\mathbb{F}$ to make it unital \cite[Remark 5]{EL}.

Let $B' =\mathit{Hom}_-(\e_3,\e_3)$, and let $\Char \mathbb{F}=2$. We consider $\Lambda_n$ with $n\geq 5$ odd. Abusing the notations between Reeb chord generators and their duals, we have $B' =\mathit{Span}_{\mathbb{F}}\{a,b, e_0, c_i, e_i\}$, $1\leq i\leq n$. The degree of generators in $\mathit{Hom}_-(\e_3,\e_3)$ is one larger than their degree in the Legendrian contact DGA. Recall $n= 2k+1$. Following the recipe of \cite{CEKSW}, the $A_\infty$ algebra structure on $B'$ is given by:

\begin{align*}
\mu^1_{B'}(c_i) & = e_i + e_{i+1}, \quad 1\leq i\leq n-1, &
\mu^1_{B'}(c_n) & = e_n + e_0, \\
\mu^2_{B'}(c_i,c_{i+1}) & = e_{i+1}, \quad\qquad 1\leq i\leq k, &
\mu^2_{B'}(b,a) &= e_1, \\
\mu^2_{B'}(c_i,c_{i-1}) & = e_{i}, \qquad\qquad k+2\leq i\leq n,  &
\mu^2_{B'}(a,b) &= e_0, \\
\mu^3_{B'}(b,a,c_1) &= e_1, &
\mu^3_{B'}(c_n,a,b) &= e_0.
\end{align*}
\smallskip

Let $B = \mathbb{F}\oplus B'$, and $1\in \mathbb{F}$ be the unit in $B$. The $A_\infty$ operations extend to $B$ in the canonical way, namely $\mu^1_B(1) = 0$, $\mu^2_B(1,x) = \mu^2_B(x,1) =x$, and for $d\geq 3$, $\mu_B^{d}=0$ if the input contains $1$. In particular, $\mu_B^d = \mu_{B'}^d$ when inputs are from $B'\subset B$.

We use the homological perturbation lemma, see eg. \cite[Proposition 1.12]{Sei}, to find a minimal model for $B$.  Define new variables by $e_i' := e_i+e_0, c_i' := \sum_{j=i}^n c_j$ for $1\leq i \leq n$. Define sub-chain complexes of $B$:
$$A :=\mathit{Span}_{\mathbb{F}}\{ 1, a, b, e_0 \}, \qquad C :=\mathit{Span}_{\mathbb{F}}\{ e_i', c_i' \} =\mathit{Span}_{\mathbb{F}}\{ e_i', c_i \}.$$
Then $B = A\oplus C$. Let $F^1: A\rightarrow B$ be the embedding and $G^1: B \rightarrow A$ be the projection. Note that $A$ and $B$ are quasi-isomorphic. Define a degree $-1$ map $T^1: B\rightarrow B$, which is the extension by zero of $T^1: C\rightarrow C$, where
\begin{equation*}
T^1(e'_i) = c_i', \qquad T^1 (c_1)=0.
\end{equation*}
It is straightforward to verify the following:
\begin{equation}\label{hpl}
T^1\mu^1 + \mu^1T^1 = F^1G^1 + \textrm{id}_B,
\end{equation}
because for any element of $A$, both sides of \eqref{hpl} are zero, and for any element of $C$, both sides of \eqref{hpl} are the identity. The homological perturbation lemma implies that $A$ admits an $A_\infty$ algebra structure induced from $B$.

We compute this induced $A_\infty$ structure on $A$. Let $A' = \mathit{Span}_{\mathbb{F}}\{a, b, e_0 \}$, then $A = \mathbb{F}\oplus A'$. It suffices to compute the $A_\infty$ operations with inputs from $A'$. The unit $1$ can be invoked canonically. We still denote by $F^1: A' \rightarrow B'$ the inclusion and $G^1: B'\rightarrow C$ the projection. Recall the definitions of $F^d$ and $\mu^d_{A'}$ from \cite[(1.18)]{Sei}.

When $d=1$, $\mu^1_{A'} = 0$, and $F^1$ is the embedding. For $d=2$, there are
\begin{align*}
F^2(x,y) = T^1(\mu^2_{B'}(F^1(x),F^1(y))) = T^1(\mu^2_{B'}(x,y)),\\
\mu^2_{A'}(x,y) = G^1(\mu^2_{B'}(F^1(x), F^1(y))) = G^1(\mu^2_{B'}(x,y)).
\end{align*}
Note $\mu^2_{B'}$ vanishes on $A'\otimes A'$ except when $(x,y) = (a,b)$ and $(b,a)$, the only possible non-zero terms in $F^2$ and $\mu^2_{A'}$ are: 
\begin{align*}
F^2(a,b) &= T^1(\mu^2_{B'}(a,b)) = T^1(e_0) = 0, \\
\mu^2_{A'}(a,b) &= G^1(\mu^2_{B'}(a,b)) = G^1(e_0) = e_0, \\
F^2(b,a) &= T^1(\mu^2_{B'}(b,a)) = T^1(e_1) = T^1(e_0+e_1') = c_1', \\
\mu^2_{A'}(b,a) &= G^1(\mu^2_{B'}(b,a)) = G^1(e_1) = G^1(e_0+e_1') = e_0.
\end{align*}

\begin{lemma}\label{lem1}
If $d\geq 3$ is odd, then $F^d= 0$, $\mu^d_{A'}=0$.
\end{lemma}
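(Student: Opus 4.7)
The strategy is to run a parity argument on the tree formula for $F^d$ and $\mu^d_{A'}$. Recall from \cite[(1.18)]{Sei} that both $F^d$ and $\mu^d_{A'}$ are sums over rooted planar trees $T$ with $d$ leaves and internal vertices of arity $\geq 2$. The contribution of such a $T$ is computed by labelling its leaves in planar order by $x_1,\dots,x_d \in A'$, labelling each internal vertex of arity $k$ by $\mu^k_{B'}$, applying $T^1$ along every internal edge, and capping the output at the root by $T^1$ (for $F^d$) or $G^1$ (for $\mu^d_{A'}$). In particular the two formulas range over the same collection of trees, so it suffices to show that every such tree evaluates to zero when $d$ is odd and $\geq 3$.

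The next step is to inventory which trees can possibly contribute. Since $\mu^k_{B'}=0$ for all $k\geq 4$ (no such operations appear in the list above), only arity-$2$ and arity-$3$ internal vertices occur. Moreover the image of $T^1$ is contained in $\mathrm{span}_{\F}\{c_1,\dots,c_n\}$, so at each internal vertex every non-leaf child takes values in this $c$-span while every leaf child takes values in $A'=\{a,b,e_0\}$. Reading off the listed operations of $B'$: each non-zero $\mu^2_{B'}$ has either both inputs in $\{a,b\}$ (namely $(a,b)$ and $(b,a)$) or both inputs in the $c$-span (the cases $(c_i,c_{i\pm1})$); and each non-zero $\mu^3_{B'}$, i.e.\ $(b,a,c_1)$ or $(c_n,a,b)$, has exactly two $\{a,b\}$-inputs (in leaf position) and one $c$-input (in non-leaf position).

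Two observations now close the argument. First, the generator $e_0$ never appears as an input to any non-zero $\mu^k_{B'}$, so if some $x_i=e_0$ every tree contribution vanishes; we may therefore restrict to inputs with all $x_i\in\{a,b\}$, in which case the total number of $\{a,b\}$-leaves equals $d$. Second, in any contributing tree each such leaf is a child of either an arity-$2$ vertex both of whose children are leaves (contributing two leaves) or an arity-$3$ vertex (also contributing exactly two leaves). Summing over parents of leaves, the total leaf count is therefore even, contradicting $d$ odd. Hence no tree contributes and $F^d=\mu^d_{A'}=0$. The argument is purely combinatorial; the only point requiring care is verifying that the displayed list of $\mu^k_{B'}$-operations is complete, so there is no genuine obstacle.
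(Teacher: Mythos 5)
Your argument is correct, but it is organized differently from the paper's. The paper proves the lemma by induction on odd $d$, working directly with Seidel's recursive formula: for $d=3$ it checks the three terms by hand, and for $d\geq 5$ it kills each term $\mu^2_{B'}(F^i,F^{d-i})$ and $\mu^3_{B'}(F^i,F^j,F^k)$ either by the structural facts ($\mu^2_{B'}$ vanishes on $A'\otimes C$ and $C\otimes A'$; $\mu^3_{B'}$ is supported on $A'\otimes A'\otimes C$ and $C\otimes A'\otimes A'$; $\operatorname{im}F^{\geq 2}\subset C$) or by the inductive hypothesis that $F^{\mathrm{odd}}=0$. You instead unfold the recursion into the standard planar-tree expansion (note that \cite[(1.18)]{Sei} is stated recursively; its tree form is a standard but unstated unfolding, so you should say you are using it) and run a one-shot parity count: $e_0$ never occurs as an input of a nonzero operation, leaves may be taken in $\{a,b\}$, and every nonzero vertex of a contributing tree consumes its $\{a,b\}$-leaves exactly in pairs (arity $2$ with two leaf children, or arity $3$ with two leaf children and one $c$-span child), forcing the number of leaves $d$ to be even. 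The structural inputs are identical to the paper's facts (1)--(4), and in characteristic $2$ there are no sign issues, so the proof is complete. What your route buys is a non-inductive, more conceptual explanation of the parity phenomenon (nonzero transferred operations can only exist in even arity, consistent with the paper's subsequent computation of $\mu^{2l}_{A'}$), at the cost of invoking the tree expansion; the paper's induction stays entirely within the recursive formula it has already displayed and sets up the template reused in the even-$d$ lemma.
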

\begin{proof}
Observe the following facts:
\begin{enumerate}
\item $\mu^d_{B'} = 0$ for $d\geq 4$.
\item $\mathrm{im}\, F^1\in A'$, $\text{im } F^d \subset C$ for $d\geq 2$.
\item $\mu_{B'}^3$ in non vanishing only on $A'\otimes A'\otimes C$ and $C\otimes A' \otimes A'$.
\item $\mu_{B'}^2$ vanishes on $A'\otimes C$ and $C\otimes A'$.
\end{enumerate}
We prove by induction. For $d = 3$, there are
\begin{align*}
F^3(x,y,z) &= T^1(\mu^2_{B'}(F^2(x,y),F^1(z))) + T^1(\mu^2_{B'}(F^1(x),F^2(y,z))) + T^1(\mu^3_{B'}(F^1(x),F^1(y),F^1(z))), \\
\mu^3_{A'}(x,y,z) &= G^1(\mu^2_{B'}(F^2(x,y),F^1(z))) + G^1(\mu^2_{B'}(F^1(x),F^2(y,z))) + G^1(\mu^3_{B'}(F^1(x),F^1(y),F^1(z))).
\end{align*}
Note the only non-trivial terms in $F^2$ is $F^2(b,a) = c_1' \in C$, but $\mu^2_{B'}$ vanishes on $A'\otimes C$ and $C\otimes A'$. Therefore $\mu^2_{B'}(F^2(x,y),F^1(z)) = \mu^2_{B'}(F^1(x),F^2(y,z)) = 0$ for any $(x,y,z) \in (A')^{\otimes 3}$.  Also $\mu^3_{B'}$ vanishes on $(A')^{\otimes 3}$. Therefore $F^3 =0, \mu^3_{A'}=0.$

For $d\geq 5$, consider the term $\mu_{B'}^2(F^i,F^{d-i})$. If $i=1$ or $d-i=1$, then it vanishes because of (4). Otherwise $i\neq 1,d-i\neq 1$, then one of them, say $F^i$, must be an odd number greater than $1$, by induction, $F^i =0$, so as the term $\mu^2_{B'}$. Next consider $\mu_{B'}^3(F^i,F^j, F^k)$. By $(3)$, we must have $i=j=1$, yielding $k$ an odd number, or $j=k=1$, yielding $i$ an odd number. In any case, the term $\mu^3_{B'}=0$. All higher $\mu^4_{B'}=0$ are zero by $(1)$. The proof is completed.
\end{proof}

\begin{lemma}
If $d\geq 4$ is even, then the only non-vanishing terms are:
\begin{align*}
F^d(b,a,\dotsb,b,a,b,a) & = c_1 + c_3 + \dotsb + c_n, \\
\mu^d_{A'}(b,a,\dotsb,b,a,b,a) &= e_0, \\
\mu^d_{A'}(b,a,\dotsb,b,a,a,b) &= e_0.
\end{align*}
\end{lemma}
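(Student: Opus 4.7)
The plan is to proceed by strong induction on even $d \geq 4$, using the homological perturbation lemma recursion $F^d = T^1 \Xi$ and $\mu^d_{A'} = G^1 \Xi$, where
\begin{equation*}
\Xi(x_1,\ldots,x_d) = \sum_{\substack{k\geq 2\\ s_1+\cdots+s_k=d}} \mu^k_{B'}\!\bigl(F^{s_1}(x_1,\ldots,x_{s_1}),\ldots,F^{s_k}(x_{d-s_k+1},\ldots,x_d)\bigr).
\end{equation*}
The explicit formulas for $F^1$ and $F^2$ together with Lemma~\ref{lem1} seed the induction; the base case $d=4$ is handled by the same analysis described below.

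First I would narrow the allowed inputs and partitions. Since $|a|=|b|=1$, $|e_0|=2$, and $\mu^d_{A'}$ has degree $2-d$, an $e_0$-valued output forces each $x_i \in \{a,b\}$. Since $\mu^k_{B'}=0$ for $k\geq 4$, only $k=2$ and $k=3$ contribute to $\Xi$. For $k=2$, observations $(2)$ and $(4)$ in the proof of Lemma~\ref{lem1} force both $s$ and $d-s$ to be $\geq 2$, and then the inductive hypothesis (combined with Lemma~\ref{lem1}) forces each to be even with the corresponding input block equal to $(b,a,\ldots,b,a)$, so the overall input is the alternating $(b,a,\ldots,b,a)$. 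For $k=3$, observation $(3)$ forces exactly one of $s_1,s_2,s_3$ to be even $\geq 2$ (landing in $C$) and the other two to be $1$ (landing in $A'$); the middle placement contributes nothing because the nonzero $\mu^3_{B'}$ triples, $(b,a,c_1)\in A'\otimes A'\otimes C$ and $(c_n,a,b)\in C\otimes A'\otimes A'$, do not fit the pattern $A'\otimes C\otimes A'$. The remaining placements force the input to be either $(b,a,\ldots,b,a)$ (via $\mu^3_{B'}(b,a,c_1)$) or $(b,a,\ldots,b,a,a,b)$ (via $\mu^3_{B'}(c_n,a,b)$).

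Next I would compute $\Xi$ for these two patterns. Set $\alpha := c_1' = \sum_j c_j$ and $\beta := c_1+c_3+\cdots+c_n$, so that $F^2(b,a)=\alpha$ and $F^s(b,a,\ldots,b,a)=\beta$ for even $s\geq 4$ by induction. A direct calculation from the description of $\mu^2_{B'}$ gives
\begin{equation*}
\mu^2_{B'}(\beta,\beta)=0,\qquad \mu^2_{B'}(\alpha,\alpha)=\mu^2_{B'}(\alpha,\beta)+\mu^2_{B'}(\beta,\alpha)=\sum_{j=2}^n e_j,
\end{equation*}
the first because every nonzero $\mu^2_{B'}(c_i,c_j)$ has $|i-j|=1$ and hence forces one index to be even. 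Consequently the $k=2$ contributions collapse to $\sum_{j=2}^n e_j$ independently of $d$, and adding the $k=3$ contribution $\mu^3_{B'}(b,a,c_1)=e_1$ gives $\Xi(b,a,\ldots,b,a) = \sum_{j=1}^n e_j$ for every even $d\geq 4$. Applying $T^1$ and using $\sum_{\ell=1}^n \ell\cdot c_\ell = c_1+c_3+\cdots+c_n$ in characteristic two yields $F^d(b,a,\ldots,b,a) = c_1+c_3+\cdots+c_n$, while $G^1$ yields $n\cdot e_0 = e_0$ since $n$ is odd. For $(b,a,\ldots,b,a,a,b)$, only the remaining $k=3$ term $\mu^3_{B'}(F^{d-2}(b,a,\ldots,b,a),a,b)=e_0$ survives (the coefficient of $c_n$ in both $\alpha$ and $\beta$ is $1$), giving $F^d=T^1(e_0)=0$ and $\mu^d_{A'}=G^1(e_0)=e_0$.

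The main obstacle is the $d$-independent collapse $\Xi(b,a,\ldots,b,a)=\sum_{j=1}^n e_j$: without the cancellations $\mu^2_{B'}(\beta,\beta)=0$ and $\mu^2_{B'}(\alpha,\beta)+\mu^2_{B'}(\beta,\alpha)=\mu^2_{B'}(\alpha,\alpha)$, the number of contributing $k=2$ summands grows with $d$ and $\Xi$ would a priori depend on $d\bmod 4$, contradicting the uniform formula asserted by the lemma.
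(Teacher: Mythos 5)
Your proposal is correct and follows essentially the same route as the paper: induction through the homological perturbation recursion, using the same structural observations about which compositions $\mu^2_{B'}(F^i,F^{j})$ and $\mu^3_{B'}(F^i,F^j,F^k)$ can survive, and then applying $T^1$ and $G^1$ to $e_1+\dotsb+e_n$ (resp. $e_0$). The only differences are cosmetic streamlinings — the degree count ruling out $e_0$ inputs and the identities $\mu^2_{B'}(\beta,\beta)=0$, $\mu^2_{B'}(\alpha,\beta)+\mu^2_{B'}(\beta,\alpha)=\mu^2_{B'}(\alpha,\alpha)$, which let you treat $d=4$ and $d\geq 6$ uniformly where the paper separates them.
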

\begin{proof}
We prove by induction. When $d =4$, since $F^3=0$, $\mu_{B'}^3 =0$ on $A'\otimes C\otimes A'$, and $\mu_{B'}^4= 0$ on $(A')^{\otimes 4}$, the possible non-vanishing terms are:
$$\mu_{B'}^2(F^2(x,y), F^2(z,w)),\quad \mu_{B'}^3(F^1(x), F^1(y), F^2(z,w)), \quad \mu_{B'}^3(F^2(x,y), F^1(z), F^1(w)).$$

The first two terms are non-zero iff $(x,y,z,w) = (b,a,b,a)$, the third term is non-zero iff  $(x,y,z,w) = (b,a,a,b)$.

If $(x,y,z,w) = (b,a,b,a)$, then the three terms above are $e_2 + e_3 + \dotsb +e_n$, $e_1$, $0$. Hence
\begin{align*}
F^4(b,a,b,a) &= T^1(e_1+ \dotsb + e_n ) = c_1' + c_2' + \dotsb + c_n' = c_1 + c_3 + \dotsb + c_n, \\
\mu^4_{A'}(b,a,b,a) &= G^1(e_1+ \dotsb + e_n) = e_0.
\end{align*}

If $(x,y,z,w) = (b,a,a,b)$, then three terms above are $0$, $0$, $e_0$. Hence
\begin{align*}
F^4(b,a,a,b) &= T^1(e_0) = 0, \\
\mu^4_{A'}(b,a,a,b) &= G^1(e_0) = e_0.
\end{align*}

For $d\geq 6$, consider $\mu_{B'}^2(F^i, F^{d-i})$. If $i$ and $d-i$ are odd, then apply Lemma \ref{lem1}. If $i, d-i$ are even numbers that are not $2$, then by induction hypothesis
$\mbox{im}\,\mu^2_{B'}(F^i,F^{d-i}) \subset \mathit{Span}_\mathbb{F} \{\mu^2_{B'}(c_1 + c_3 + \dotsb + c_n, c_1 + c_3 + \dotsb + c_n)\} = \{0\}$. For a non-vanishing $\mu_{B'}^3(F^i, F^j, F^k)$, it must be either $i=j=1$ or $j=k=1$. Summarizing, non-vanishing terms are:
$$ \mu_{B'}^2(F^{d-2}, F^2),\quad \mu_{B'}^2(F^2, F^{d-2}),\quad \mu_{B'}^3(F^1, F^1, F^{d-2}), \quad \mu_{B'}^2(F^{d-2}, F^1,F^1).$$
The first three terms are non-zero only on $(b,a,\dotsb,b,a,b,a)$, and the last term are non-zero only on $(b,a,\dotsb, b,a,a,b)$.

For $(b,a,\dotsb,b,a,b,a)$, then the four terms become $\displaystyle { \sum_{\textrm{even } i=2}^{k+1} e_i + \sum_{\textrm{odd } j= k+2}^n e_j }$, $\displaystyle { \sum_{\textrm{odd } i=2}^{k+1} e_i + \sum_{\textrm{even } j= k+2}^n e_j }$, $e_1$, $0$. Hence,
\begin{align*}
F^d(b,a,\dotsb,b,a,b,a) &= T^1(e_1+ \dotsb + e_n) = c_1' + c_2' + \dotsb + c_n' = c_1 + c_3 + \dotsb + c_n, \\
\mu^d_{A'}(b,a,\dotsb,b,a,b,a) &= G^1(e_1+ \dotsb + e_n) = e_0.
\end{align*}

For $(b,a,\dotsb, b,a,a,b)$, then the four terms become $0$, $0$, $0$, $e_0$. Hence
\begin{align*}
F^d(b,a,\dotsb, b,a,a,b) &= T^1(e_0) = 0, \\
\mu^d_{A'}(b,a,\dotsb, b,a,a,b) &= G^1(e_0) = e_0.
\end{align*}
\end{proof}

Summarizing, we have the following.
\begin{proposition}\label{minmodel}
Let $A =\mathit{Span}_{\mathbb{F}}\{ 1, a, b, e_0 \}$ be the strict unital $A_\infty$ algebra, where the non-trivial $A_\infty$ operations, except for those canonical ones involving the unit $1$, are
\begin{align*}
\mu^{2l}((b,a)^{l-1}, b,a) &= e_0, \\
\mu^{2l}((b,a)^{l-1}, a,b) &= e_0.
\end{align*}
Then $A\cong \mathrm{RHom}_{CE^*}(\epsilon_3, \epsilon_3)$ as $A_\infty$ algebras.
\end{proposition}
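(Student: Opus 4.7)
The plan is to deduce the proposition directly from the preceding lemmas together with the homological perturbation lemma. First, I would observe that the data $(F^1, G^1, T^1)$ constructed above form a deformation retract of $B$ onto $A$, as certified by the identity \eqref{hpl}. Applying the standard homological perturbation lemma (see e.g.\ \cite[Proposition 1.12]{Sei}) then transfers the $A_\infty$ structure on $B$ to an $A_\infty$ structure on $A$ together with an $A_\infty$-quasi-isomorphism $F^\bullet \colon A \to B$. Because $A$ carries zero differential and has the same underlying graded vector space as the cohomology of $B$, this produces an $A_\infty$-isomorphism identifying $A$ with the minimal model of $\mathrm{RHom}_{CE^*}(\epsilon_3,\epsilon_3)$, which gives the asserted isomorphism.

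Next, I would read off the values of the induced operations $\mu^d_A$ from the explicit formulas already recorded. All operations involving the strict unit $1$ are canonical, so every nontrivial question reduces to operations on $A' = \mathit{Span}_{\mathbb{F}}\{a, b, e_0\}$. The direct $d=2$ calculation performed above yields $\mu^2_{A'}(a,b) = \mu^2_{A'}(b,a) = e_0$; Lemma \ref{lem1} annihilates every odd $d \geq 3$ operation; and the lemma immediately preceding the proposition identifies the only nonvanishing even-arity operations as $\mu^d_{A'}(b,a,\ldots,b,a,b,a) = e_0$ and $\mu^d_{A'}(b,a,\ldots,b,a,a,b) = e_0$. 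Reindexing $d = 2l$, these are precisely the two families $\mu^{2l}((b,a)^{l-1},b,a) = e_0$ and $\mu^{2l}((b,a)^{l-1},a,b) = e_0$ appearing in the statement.

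The main obstacle, which has already been handled in the two lemmas, is the inductive combinatorics controlling the Seidel recursion for $F^\bullet$ and $\mu^\bullet_{A'}$. The argument rests on three structural features of $(B, \mu^\bullet_B)$: (i) $\mu^d_{B'} = 0$ for every $d \geq 4$, (ii) $\mu^2_{B'}$ vanishes on $A' \otimes C$ and on $C \otimes A'$, and (iii) $\mu^3_{B'}$ is supported only on $A' \otimes A' \otimes C$ and on $C \otimes A' \otimes A'$. Combined with the inductive conclusions that $F^i$ lands in $C$ for $i \geq 2$ and vanishes identically for odd $i \geq 3$, these constraints force the alternating pattern $(b,a,\ldots)$ on the inputs that survive and reduce the recursion to the finitely many contributions tabulated above. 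Once this bookkeeping is in place, no further work is required and the proposition follows.
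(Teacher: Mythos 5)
Your proposal is correct and follows essentially the same route as the paper: Proposition \ref{minmodel} is there stated as a summary of exactly the ingredients you assemble, namely the deformation retract $(F^1,G^1,T^1)$ certified by \eqref{hpl} feeding into the homological perturbation lemma \cite[Proposition 1.12]{Sei}, the direct $d=2$ computation, Lemma \ref{lem1} for odd arities, and the even-arity lemma, with $d=2l$ reindexed so that $\mu^2$ is the $l=1$ case. No gaps to report.
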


\begin{proposition}\label{ELformal}
$A$ is formal. By Proposition \ref{minmodel}, $\mathrm{RHom}_{CE^*}(\epsilon_3, \epsilon_3)$ is also formal.
\end{proposition}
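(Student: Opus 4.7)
The plan is to prove formality of $A$ by exhibiting an explicit strictly unital $A_\infty$ quasi-isomorphism $\Phi\colon H\to A$, where $H=(A,\mu^2_A)$ denotes the underlying graded algebra equipped with the trivial higher-operation structure. Since both algebras are minimal ($\mu^1=0$), taking $\Phi^1=\mathrm{id}_A$ automatically yields a quasi-isomorphism at the chain level, so the task reduces to constructing the higher components $\Phi^d$ for $d\geq 2$ in a way that satisfies the $A_\infty$ morphism equations. Because $\mu^s_H=0$ for $s\neq 2$ and $\mu^r_A=0$ for odd $r$, the arity-$d$ morphism equation collapses to
\[
\sum_{\substack{r\,\text{even}\\i_1+\cdots+i_r=d}} \mu^r_A(\Phi^{i_1},\ldots,\Phi^{i_r}) \;=\; \sum_{j=1}^{d-1}\Phi^{d-1}\bigl(x_1,\ldots,\mu^2_H(x_j,x_{j+1}),\ldots,x_d\bigr),
\]
and by Proposition \ref{minmodel} the only potentially non-vanishing top-order term $\mu^d_A$ appears for $d=2l$ and is supported on the two inputs $((b,a)^{l-1},b,a)$ and $((b,a)^{l-1},a,b)$.

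I would first dispatch the base case $d=4$ by direct computation. Take $\Phi^2=0$ and define $\Phi^3$ to vanish on every triple from $\{1,a,b,e_0\}$ except
\[
\Phi^3(a,b,a)=a,\qquad \Phi^3(b,a,b)=b,\qquad \Phi^3(b,a,e_0)=e_0.
\]
Each of the $16$ pure-degree-$1$ inputs reduces to a short cancellation in $\mathbb{F}_2$; for inputs involving $e_0$, both sides vanish because $\mu^2_H(e_0,-)=\mu^2_A(e_0,-)=0$ and $e_0$ is top-degree. Equivalently, this check verifies that $\mu^4_A$ is a Hochschild coboundary $d^{HH}\Phi^3$ in the Hochschild cochain complex of $H$.

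For the inductive step at arity $2l\geq 6$, I would adopt the analogous ansatz: $\Phi^{2l-1}$ vanishes except on the alternating inputs $((b,a)^{l-1},b)\mapsto b$, $((a,b)^{l-1},a)\mapsto a$, and a single input containing one $e_0$ (by degree, such an output necessarily lies in $\mathrm{Span}\{e_0\}$). The bookkeeping argument is that the terms $\mu^{2j}_A(\Phi^{i_1},\ldots,\Phi^{i_{2j}})$ with $i_1+\cdots+i_{2j}=2l$ and $j<l$ contribute non-zero values only when every $\Phi^{i_k}$ is evaluated on an alternating input producing $a$ or $b$; by inspection of the support of $\mu^{2j}_A$, these contributions come in pairs from the two splittings obtained by shifting the $\Phi^{2k-1}$ factor by one position, and so cancel in characteristic $2$, leaving only the $\mu^{2l}_A$ term and the designated $\Phi^{2l-1}$ correction on the $e_0$-containing input.

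The hard part will be the apparent symmetric tension between the mirror inputs $((b,a)^l)$ and $((a,b)^l)$: the former carries $\mu^{2l}_A=e_0$ while the latter carries $\mu^{2l}_A=0$, yet under any ansatz that treats $a$ and $b$ symmetrically on pure degree-$1$ inputs, the $\Phi$-contributions to both equations coincide, producing a contradictory system. The resolution, already visible at $d=4$ in the value $\Phi^3(b,a,e_0)=e_0$, is that the $e_0$-valued components of $\Phi^{2l-1}$ on single-$e_0$ inputs provide exactly one parameter of asymmetric freedom per level, and we verify at each $l$ that the resulting affine system over $\mathbb{F}_2$ imposed by the morphism equation is consistent and solvable. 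Once every $\Phi^d$ has been produced, the map $\Phi\colon H\to A$ is an $A_\infty$ quasi-isomorphism, establishing the formality of $A$, and consequently that of $\mathrm{RHom}_{CE^*}(\epsilon_3,\epsilon_3)$ by Proposition \ref{minmodel}.
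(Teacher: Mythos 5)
Your base case is fine: with $\Phi^1=\mathrm{id}$, $\Phi^2=0$ and your $\Phi^3$, the arity-$4$ equation does close on all inputs (note, though, that inputs containing the unit such as $(1,b,a,e_0)$ do not have both sides vanish -- they give $e_0=e_0$ -- so the claim that ``inputs involving $e_0$'' are trivially fine needs the unital cases spelled out). The genuine gap is the inductive step, which is where all the content of formality lies. For $d\geq 6$ you assert that the lower-order contributions ``come in pairs \dots and so cancel'' and that the remaining affine system for the $e_0$-components of $\Phi^{2l-1}$ is ``consistent and solvable''; the first claim is false as stated and the second is exactly the statement to be proved. Concretely, at $d=6$ on the input $(a,b,a,b,a,b)$ your ansatz gives three nonzero terms $\mu^2_A(\Phi^1,\Phi^5)$, $\mu^2_A(\Phi^5,\Phi^1)$, $\mu^2_A(\Phi^3,\Phi^3)$, each equal to $e_0$ -- an odd number, so no pairwise cancellation -- while $\mu^6_A((a,b)^3)=0$; on $(b,a)^3$ the analogous sum together with $\mu^6_A=e_0$ vanishes, and on $((b,a)^2,a,b)$ it equals $e_0$. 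These three equations already force the $e_0$-valued part of $\Phi^5$ to be supported on a carefully chosen combination of several single-$e_0$ inputs (an odd number from one family, an even number from another), not on ``a single input containing one $e_0$,'' and the constraints at each level depend on all choices made at lower levels. Nothing in the proposal shows this growing system over $\mathbb{F}_2$ remains consistent for every $l$; asserting that ``we verify at each $l$'' it is solvable is a plan, not a proof, since the solvability of these successive (Hochschild-type) obstruction equations is precisely the formality statement.

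The difficulty is built into the direction you chose. Mapping out of the formal algebra $H$, every arity-$d$ equation receives target-side contributions from all the nonzero operations $\mu^{2j}_A$, $j\leq l$, composed with all previously constructed components of $\Phi$, so the combinatorics compound. The paper runs the morphism the other way, $F\colon A\to D$ with $D$ formal, taking $F^1=\mathrm{id}$, $F^3(b,a,e_0)=e_0$ and all other $F^d=0$: there the target side involves only $\mu^2_D$, the image of $F^{\geq 2}$ lies in $\mathrm{Span}\{e_0\}$ with $\mu^2_D(e_0,e_0)=0$, and the infinitely many $\mu^{2l}_A$ enter only through $F^1$ and $F^3$, where they cancel in pairs in characteristic $2$; the verification then closes in finitely many cases. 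To salvage your route you would need either a closed-form description of all $\Phi^{2l-1}$ with a complete check of the morphism equation for every $l$, or an independent argument (e.g.\ a degree or obstruction-theoretic one) that each successive linear system is solvable; as written, neither is supplied.
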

\begin{proof}
Let $D$ be the formal $A_\infty$ algebra such that $D = A$ as cochain complexes and $\mu^2_D = \mu^2_A$. In particular, $\mu^d_D = 0$ for all $d \geq 3$. Define an $A_\infty$ algebra morphism $\{F^d: A^{\otimes d}\rightarrow D\}$, where the only non-vanishing terms are:
$$F^1 = \mathrm{id}, \qquad F^3(b,a,e_0) = e_0.$$
We check $\{F^d\}$ is an $A_\infty$ algebra morphism. Recall from \cite[(1.6)]{Sei} for the equations:
\begin{align*}
&\sum_r \sum_{s_1,\dotsb,s_r} \mu_D^r(F^{s_r}(a_d,\dotsb, a_{d-s_r+1}),\dotsb, F^{s_1}(a_{s_1},\dotsb, a_{1})) \\
=&\sum_{m,n}F^{d-m+1} (a_d,\dotsb,a_{n+m+1},\mu_A^{m}(a_{n+m}, \dotsb, a_{n+1}),a_n,\dotsb, a_1).
\end{align*}

The case $d=2$ can be checked easily. 

For $d\geq 3$, note (1) $\mu^{d}_D = 0$ for $d \neq 2$, (2) $\mathrm{im}\, F^d\subset \langle e_0 \rangle$ for $d\geq 2$, (3) $\mu^2_D(e_0, e_0) = 0$.
Then LHS vanishes for all $d$ except for the following two cases when $d=4$:
$$\mu^2_D(F^3(b,a,e_0),F^1(1)) = e_0,\qquad \mu^2_D(F^1(1), F^3(b,a,e_0)) = e_0$$

Suppose $d$ is odd. If $m$ is odd, then $\mu_A^m=0$, and RHS is zero; otherwise if $m$ is even, then $d-m+1$ is even, then $F^{d-m+1} =0$, the RHS is also zero.

Suppose $d$ is even. Note $F^{d-m+1}$ is only non zero for $d-m+1 =1$ or $3$. 
\begin{itemize}
\item When $d-m+1 = 1$, $F^1(\mu_A^d(\dotsb))\neq 0$ iff $\mu_A^d \neq 0$. Hence we must have $d$ even and input $(b,a)^l$ or $((b,a)^{l-1},a,b)$, where $d=2l$.
\item When $d-m+1 = 3$, observe that (1) $\mu_A^m =0$ for $m$ odd, (2) when the input contains $1$, then we must have $m=2$ for the output to be non-zero. The only input that may contribute a non-zero out put is $(1,b,a,e_0)$, $(b,1,a,e_0)$, $(b,a,1, e_0)$, $(b,a,e_0,1)$, $(b,a)^l$, or $((b,a)^{l-1}, a,b)$, where $d=2l$.
\end{itemize}

For $(1,b,a,e_0)$ and $(b,a,e_0,1)$, $\mathrm{LHS}=\mathrm{RHS}=e_0$. For $(b,1,a,e_0)$, $(b,a,1, e_0)$, $(b,a)^l$, or $((b,a)^{l-1}, a,b)$, $\mathrm{LHS}=\mathrm{RHS}=0$. Therefore, $\{F^d\}$ is an $A_\infty$ algebra morphism. That it is a quasi-isomorphism is clear. We have thus proved the formality.

\end{proof}

\subsection{Negative Legendrian twist knots: The general case}  \label{sec:general}

The classification of Legendrian twist knots appears in \cite{ENV}.  The Legendrians $\Lambda_n$ with $n \geq 3$ odd that we have considered belong to the topological twist knot type that is notated in \cite{ENV} as $K_{-n-1}$.  Note that $K_{-2}$ is a left handed trefoil and $K_{-1}$ is the unknot. The negative twist knots, $K_{-m}$ with $m\geq 3$ odd, have $\mbox{max-tb}(K_{-m}) = -3$, so no orientable fillings can exist.  

To recall the classification in knot types of $K_{-n-1}$ with $n=2k+1\geq 3$ odd, consider a Legendrian knot as pictured in Figure \ref{fig:TwistKnot} where the box contains a product, $X$, of a total of $n-1=2k$ tangles  of the form $Z^+, Z^-, S^+,S^-$.  Moreover, we require the orientation is such that from left to right the signs alternate as $+,-,+,-,\ldots$;  all of the $Z^+$'s appear to the left of all of the $S^+$'s; and all of the $Z^-$'s appear to the left of all of the $S^-$'s.  Let $0 \leq z_-,z_+ \leq k$ denote the number of $Z^+$'s and the number of $Z^-$'s that appear in $K_{z_+,z_-}$.  For example, $\Lambda_n = K_{\left\lceil \frac{k}{2}\right\rceil, \left\lfloor \frac{k}{2} \right\rfloor}$.

\begin{proposition}[\cite{ENV}] 
 The topological twist knot type $K_{-n-1}$ with $n=2k+1\geq 3$ odd has $\mbox{max-tb}(K_{-n-1}) =1$.  Any Legendrian $K_{-n-1}$ knot with maximal $\mathit{tb}$ is Legendrian isotopic to $K_{z_+,z_-}$ for some $0 \leq z_-, z_+ \leq k$.  Moreover, the $K_{z_+,z_-}$ are all distinct except that
\[
K_{z_+,z_-} \cong K_{k-z_+, k-z_-}.
\]
\end{proposition}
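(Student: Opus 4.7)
The plan is to prove the three assertions in turn: the bound $\mbox{max-tb}(K_{-n-1}) = 1$, the reduction of an arbitrary max-$\mathit{tb}$ representative to one of the $K_{z_+,z_-}$, the symmetry $K_{z_+,z_-}\cong K_{k-z_+,k-z_-}$, and the distinctness of the remaining parameters.

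First I would verify, by a direct writhe-minus-right-cusp count in the front projection of Figure~\ref{fig:TwistKnot}, that each $K_{z_+,z_-}$ achieves $\mathit{tb}=1$. For the upper bound $\mathit{tb}\leq 1$, I would invoke the slice-Bennequin inequality $\mathit{tb}(L)+|r(L)|\leq 2g_s(L)-1$ together with the fact that $K_{-n-1}$ has smooth slice genus one (it has a genus-one Seifert surface realized by the obvious twisted band, and Seifert genus bounds slice genus from above); alternatively one can apply the Kauffman polynomial bound of Rudolph. Either way, combined with $r\equiv \mathit{tb}\pmod{2}$ considerations, this forces $\mathit{tb}\leq 1$ with $r=0$ at the maximum.

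Next, to reduce an arbitrary max-$\mathit{tb}$ representative $L$ to the normal form $K_{z_+,z_-}$, the approach is convex surface theory. Take a convex Seifert surface $\Sigma$ of genus one with Legendrian boundary $L$; since $\mathit{tb}(L)=1$, the dividing set $\Gamma_\Sigma$ has a constrained form. Cut $\Sigma$ along a nonseparating arc adapted to the twisting band to obtain a disk with a prescribed dividing pattern, and use the Legendrian realization principle together with a bypass analysis to show that, after Legendrian isotopy, the twist region decomposes as a word of length $2k$ in the four elementary tangles $Z^\pm, S^\pm$. The requirement that orientations alternate $+,-,+,-,\ldots$ follows from the global orientation of the knot, and bypass cancellations between adjacent factors of the same sign produce the canonical form with all $Z^+$'s to the left of all $S^+$'s and similarly for the negatives.

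The symmetry $K_{z_+,z_-}\cong K_{k-z_+,k-z_-}$ I expect to come from a global geometric involution of the diagram, e.g.\ a $\pi$-rotation of the front about a horizontal axis bisecting the twist box, which exchanges $Z^+\leftrightarrow S^+$ and $Z^-\leftrightarrow S^-$, and hence permutes $(z_+,z_-)\mapsto (k-z_+,k-z_-)$ while preserving the Legendrian isotopy class.

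The main obstacle is distinctness: one must certify that no other coincidences occur among the $K_{z_+,z_-}$. Here I would use the Chekanov-Eliashberg DGA. For each $K_{z_+,z_-}$ one can write down the differential on Reeb chord generators explicitly, enumerate the $\F_2$-augmentations, and compute the multiset $\{P^\varepsilon(t)\}_\varepsilon$ of linearized Poincar\'e polynomials as an isotopy invariant. The combinatorics should produce a sharp dependence on $(z_+,z_-)$ modulo the involution above. This computation is the delicate part: it requires handling the full family uniformly in $k$ and ensuring that no stable tame isomorphism collapses two putatively different invariants. Having extracted from the multiset a complete invariant separating the orbits of the involution, the classification follows.
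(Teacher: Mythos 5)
This proposition is not proved in the paper at all: it is quoted, with citation, from \cite{ENV}, so the only meaningful comparison is with that source, whose proof does follow the broad strategy you outline (a Bennequin-type bound for the maximal $\mathit{tb}$, convex surface theory to normalize the twist region, an explicit identification giving the symmetry, and Legendrian contact homology for distinctness). The problem is that your write-up asserts, rather than proves, exactly the two steps that constitute the real content. First, the normalization: the sentence ``use the Legendrian realization principle together with a bypass analysis to show that the twist region decomposes as a word of length $2k$ in $Z^\pm, S^\pm$'' is a restatement of the main technical theorem, not an argument, and the proposed setup (a convex genus-one Seifert surface cut along a nonseparating arc) is not obviously adapted to the twist region; in \cite{ENV} the analysis is carried out on convex disks/annuli/tori associated with the twist region, followed by explicit front isotopies giving the commutation relations that sort the $Z$'s before the $S$'s. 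Note also that adjacent tangles have \emph{opposite} signs by construction, so there are no ``bypass cancellations between adjacent factors of the same sign'' available. Second, distinctness: you explicitly defer the DGA computation, but that computation \emph{is} the proof of this part. Your proposed invariant (the multiset of linearized Poincar\'e polynomials over $\mathbb{F}_2$) requires the $\Z$-gradings of the twist-region crossings, which depend on $(z_+,z_-)$, and one must actually carry out the computation uniformly in $k$ and verify that it separates the orbits of the involution; nothing in your sketch certifies this, and it cannot be taken for granted.

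There is also a concrete error in the symmetry step: a $\pi$-rotation of the front about a \emph{horizontal axis} is a reflection of the front across a horizontal line, i.e.\ the Legendrian mirror $(x,y,z)\mapsto (x,-y,-z)$, which does \emph{not} in general preserve the Legendrian isotopy class (there are Legendrian knots not isotopic to their Legendrian mirrors). The front symmetries one may use freely are planar isotopies, Legendrian Reidemeister moves, and rotation of the front by $\pi$ about a \emph{point}; so as stated this step is unjustified, and one needs either such a point-rotation argument or the explicit isotopies of \cite{ENV}. A minor simplification elsewhere: slice-Bennequin is more than needed for the upper bound, since the genus-one Seifert surface together with the classical Bennequin inequality already gives $\mathit{tb}\leq 2g-1=1$.
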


The Legendrian contact DGA of $K_{z_+,z_-}$ is as follows:

\begin{itemize}
\item The generators of $\mathcal{A}(K_{z_+,z_-})$ are in bijection with the generators of $\mathcal{A}(\Lambda_n)$.  For any $K_{z_+,z_-}$, let $a$ and $b$ denote the two pictured crossings to the right of $X$; $c_n$ is the crossing to the left of $X$; working clockwise around the knot $c_1, \ldots, c_{n-1}$ are the crossings in $X$ and $e_0, e_1, \ldots, e_n$ are the generators from right cusps.   
\item The grading of generators of $K_{z_+,z_-}$ agrees with the grading in $\Lambda_n$ mod $2$.  (However, the $\mathbb{Z}$-gradings of the DGAs are distinct and can distinguish some of the non-isotopic $K_{z_+,z_-}$ from one another.)
\item If $z_+ < k$ and $z_- < k$, then the {\it abelianized} differentials of all generators are exactly the same as in $\Lambda_n$  (except in the $n=3$ case where the additional $(-b)(1+c_2c_3)$ term in (\ref{eq:2}) does not appear for $K_{0,0}$).
\item Up to Legendrian isotopy, the only max-tb knots not fitting into the above case are $K_{k,0} \cong K_{0,k}$.  For $K_{k,0}$, the abelianized differentials are the same except that (\ref{eq:2}) becomes
\[
\partial e_1 = 1+(1+ba)(-c_1) + (-b)(1+c_{n-1}c_n)(1+c_{n-3}c_{n-2}) \cdots (1+c_2c_3).
\]
\end{itemize}
To verify the computation of differentials, it is useful to keep in mind that for the Ng resolution of a front diagram, except for the standard disks at right cusps, the absolute maximum of the $x$-coordinate of any rigid disk can only occur at a positive puncture;  see \cite[Figure 5]{Ng} and the surrounding discussion. 

\begin{figure}[!ht]
\labellist
\large
\pinlabel $X$  at 122 34
\small
\pinlabel $Z^+$ [t] at 362 32
\pinlabel $Z^-$ [t] at 452 32
\pinlabel $S^+$ [t] at 532 32
\pinlabel $S^-$ [t] at 612 32

\endlabellist
\includegraphics[scale=.6]{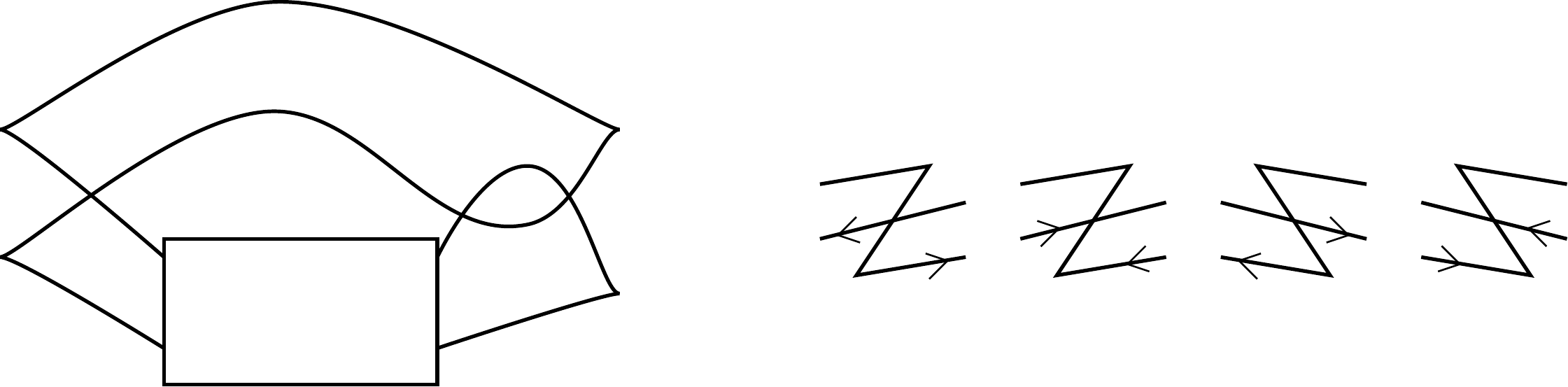}
\caption{Maximal Thurston-Bennequin number twist knots, and the tangles $Z^+,Z^-, S^+, S^-$. }
\label{fig:TwistKnot}
\end{figure}

In particular, in all cases the $m=2$ graded augmentation variety has $\mathit{Aug}_{2}(K_{z_+,z_-}, \mathbb{F}) \cong \mathit{Aug}(\Lambda_n, \mathbb{F})$.  Thus, the same argument as above establishes:

\begin{corollary}  For any $0 \leq z_+,z_- \leq k$ with either $z_+,z_- <k$ or $(z_+,z_-) = (k,0)$, $\mathcal{A}(K_{z_+,z_-})$ has a $2$-graded augmentation with $(a,b) = (0,0)$ that cannot be induced by any orientable Lagrangian filling.  In particular, any Legendrian negative twist knot with odd crossing number $\geq 5$ and maximal Thurston-Bennequin number has such an augmentation.
\end{corollary}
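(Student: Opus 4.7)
My plan is to reproduce the argument of Corollary \ref{cor:main} uniformly across the allowed family. Everything hinges on identifying, for each admissible $(z_+,z_-)$, the $2$-graded augmentation variety as
\[
\mathit{Aug}_2(K_{z_+,z_-}, \mathbb{F}) \;\cong\; V = \{(a,b) \in \mathbb{F}^2 \mid ab \neq -1\}
\]
via the projection $\e \mapsto (\e(a),\e(b))$, as foreshadowed by the remark just before the corollary. Granted this, the point $(0,0) \in V$ yields a $2$-graded augmentation with $\e(a) = \e(b) = 0$; any orientable Lagrangian filling $L$ inducing it would, by $\mathit{tb}(K_{-n-1})=1 = 2g(L)-1$, satisfy $\dim H_1(L) = 2$, and Proposition \ref{prop:main2} would then produce an injective algebraic map $(\overline{\mathbb{F}}^*)^2 \hookrightarrow V$ containing $(0,0)$ in its image, contradicting Proposition \ref{prop:V}. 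This is verbatim the argument of Corollary \ref{cor:main}.

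The real work is the variety computation. When $z_+, z_- < k$ (and, for $n=3$, $(z_+,z_-) \neq (0,0)$), the abelianized differentials listed before the corollary coincide with \eqref{eq:1}--\eqref{eq:4}, so the substitutions from the proof of Proposition \ref{prop:variety} go through unchanged: the equations $\partial e_j = 0$ for $j \geq 2$ force $c_j = c_1$ for odd $j$ and $c_j = -(ab+1)$ for even $j$, $\partial e_1 = 0$ reduces to $(1+ba)c_1 = 1$, and $\partial e_0 = 0$ determines $t = -1$. The $n=3$, $(0,0)$ subcase simply drops the $(-b)(1+c_2c_3)$ summand, leaving the same reduced equation. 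The one substantive check is $K_{k,0}$, whose $\partial e_1$ carries the extra factor $(-b)(1+c_{n-1}c_n)(1+c_{n-3}c_{n-2})\cdots(1+c_2c_3)$; but on any augmentation the abelianized relations $c_i c_{i+1} = -1$ forced by $\partial e_j = 0$ for $j \geq 2$ make each parenthesized factor vanish, so the entire extra product contributes zero and the defining equation again reduces to $(1+ba)c_1 = 1$. This verification is the only real (mild) obstacle and is where the computation could conceivably go wrong.

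For the ``In particular'' clause I invoke the Etnyre-Ng-Vertesi classification: every maximal-$\mathit{tb}$ Legendrian representative of $K_{-n-1}$, $n = 2k+1 \geq 3$ odd, is isotopic to some $K_{z_+,z_-}$ with $0 \leq z_\pm \leq k$, modulo only the identification $K_{z_+,z_-} \cong K_{k-z_+,\,k-z_-}$. Applying this involution when $z_+ = k$ and $z_- \geq 1$ replaces the pair by $(0, k-z_-)$, whose entries are both strictly less than $k$, and symmetrically when $z_- = k$ and $z_+ \geq 1$. Hence every max-$\mathit{tb}$ representative is either covered by the ``$z_\pm < k$'' hypothesis or equivalent to $K_{k,0} \cong K_{0,k}$, both cases handled above. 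No new Floer-theoretic input beyond Propositions \ref{prop:main2} and \ref{prop:V} is required; the remainder is DGA bookkeeping and the classification input from \cite{ENV}.
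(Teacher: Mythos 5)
Your proposal is correct and follows the paper's own route: identify $\mathit{Aug}_2(K_{z_+,z_-},\mathbb{F})$ with the variety $V$ of Proposition \ref{prop:variety} (the only nontrivial check being that the extra factor in $\partial e_1$ for $K_{k,0}$ vanishes on augmentations because the relations $\partial e_j=0$, $j\geq 2$, force $c_ic_{i+1}=-1$), and then rerun the argument of Corollary \ref{cor:main} via Propositions \ref{prop:main2} and \ref{prop:V}, with the Etnyre--Ng--V\'ertesi classification and the symmetry $K_{z_+,z_-}\cong K_{k-z_+,k-z_-}$ reducing every max-$\mathit{tb}$ representative to the cases covered. This is exactly the paper's (terser) proof, with the bookkeeping spelled out.
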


Of the three augmentations of these $K_{z_+,z_-}$ to $\mathbb{F}_2$, the other two can be induced by genus $1$ fillings similar to those pictured in Figure \ref{fig:TwistKnot}.

\begin{remark}
Note that the topological invariance of the $2$-graded augmentation variety exhibited here for negative twist knots is a special case of Ng's conjecture about the abelianized $2$-graded characteristic algebra, cf. \cite[Conjecture 3.14]{Ng}.  That the point count of the $2$-graded augmentation varieties over finite fields is topological is established in \cite{HR}.   Recalling that a Lagrangian filling is $2$-graded (i.e. has $m(L)$ even) if and only if it is orientable, it seems reasonable to pose the following question.
\end{remark}

\begin{question}
Does the number of oriented Lagrangian fillings up to exact Lagrangian isotopy of a Legendrian knot $K$ only depend on $\mathit{tb}(K)$ and the topological knot type of $K$?
\end{question}


\begin{thebibliography}{99}

\bibitem{ABL}  B. H. An, Y. Bae, E. Lee, \textsl{Lagrangian fillings for Legendrian links of finite type}, preprint, arXiv:2101.01943.

\bibitem{BHS}  A. Banyaga, D. Hurtubise, P. Spaeth, \textsl{Twisted Morse complexes}, preprint, arXiv:1911.07818.

\bibitem{BC} F. Bourgeois, B. Chantraine, \textsl{Bilinearized Legendrian contact homology and the augmentation category},
J. Symplectic Geom. 12 (2014), no. 3, 553--583.


\bibitem{BST} F. Bourgeois, J. Sabloff, L. Traynor, \textsl{Lagrangian cobordisms via generating families: construction and geography}, Algebr. Geom. Topol. 15 (2015), no. 4, 2439--2477.


\bibitem{CG} R. Casals, H. Gao, \textsl{Infinitely many Lagrangian fillings}, preprint, arXiv:2001.01334.

\bibitem{CN} R. Casals, L. Ng, \textsl{Braid Loops with infinite monodromy on the Legendrian contact DGA}, preprint, arXiv:2101.02318.

\bibitem{Chan}  B. Chantraine, \textsl{Lagrangian concordance of Legendrian knots}, Algebr. Geom. Topol. 10 (2010), no. 1, 63--85.


\bibitem{CDGG1} B. Chantraine, G. Dimitroglou Rizell, P. Ghiggini, R. Golovko,  \textsl{Noncommutative augmentation categories}, Proceedings of the G{\"o}kova Geometry-Topology Conference 2015, 116--150, G{\"o}kova Geometry/Topology Conference (GGT), G{\"o}kova, 2016.


\bibitem{CDGG}  B. Chantraine, G. Dimitroglou Rizell, P. Ghiggini, R. Golovko, \textsl{Floer theory for Lagrangian cobordisms}, J. Differential Geom. 114 (2020), no. 3, 393--465.

\bibitem{CEKSW} G. Civan, J. Etnyre, P. Koprowski, J. Sabloff, A. Walker, \textsl{Product structures for Legendrian contact homology}, Math. Proc. Cambridge Philos. Soc. 150 (2011), no. 2, 291--311.


\bibitem{CNS}  B. Chantraine, L. Ng, S. Sivek, \textsl{Representations, sheaves and Legendrian $(2,m)$-torus links},
J. Lond. Math. Soc. (2) 100 (2019), no. 1, 41--82. 

\bibitem{Ch} Y. Chekanov, \textsl{Differential algebra of Legendrian links}, Invent. Math. 150 (2002), no. 3, 441--483.

\bibitem{CET} J. Conway, J. Etnyre, B. Tosun, \textsl{Symplectic fillings, contact surgeries, and Lagrangian disks}, Int. Math. Res. Not, \textsl{to appear}.   

\bibitem{DR}  G. Dimitroglou Rizell,  \textsl{Lifting pseudo-holomorphic polygons to the symplectisation of $P\times \mathbb{R}$ and applications},
Quantum Topol. 7 (2016), no. 1, 29--105.

\bibitem{Ekh2} T. Ekholm, \textsl{Rational symplectic field theory over $\Z_2$ for exact Lagrangian cobordisms}, J. Eur. Math. Soc. (JEMS) 10 (2008), no. 3, 641--704. 

\bibitem{Ekh}  T. Ekholm, \textsl{Rational SFT, linearized Legendrian contact homology, and Lagrangian Floer cohomology}, Perspectives in analysis, geometry, and topology, 109--145,
Progr. Math., 296, Birkhauser/Springer, New York, 2012.



\bibitem{EHK}  T. Ekholm, K. Honda, T. K\'{a}lm\'{a}n, \textsl{Legendrian knots and exact Lagrangian cobordisms},
J. Eur. Math. Soc. (JEMS) 18 (2016), no. 11, 2627--2689. 


\bibitem{EL}  T. Ekholm, Y. Lekili, \textsl{Duality between Lagrangian and Legendrian invariants}, preprint, arXiv:1701.01284. 

\bibitem{EES05a}   T. Ekholm, J. Etnyre,  M. Sullivan, \textsl{The contact homology of Legendrian submanifolds in $\R^{2n+1}$}, J. Differential Geom. 71 (2005), no. 2, 177--305. 

\bibitem{EES05b}  T. Ekholm, J. Etnyre,  M. Sullivan, \textsl{Orientations in Legendrian contact homology and exact Lagrangian immersions}, Internat. J. Math. 16 (2005), no. 5, 453--532.


\bibitem{EES07} T. Ekholm, T. Etnyre, M. Sullivan,  \textsl{Legendrian contact homology in $P \times \R$}, Trans. Amer. Math. Soc. 359 (2007), no. 7, 3301--3335.

\bibitem{Etgu}  T. Etg\"{u},   \textsl{Nonfillable Legendrian knots in the 3-sphere}, Algebr. Geom. Topol. 18 (2018), no. 2, 1077--1088. 

\bibitem{EN} J. Etnyre, L. Ng, \textsl{Legendrian contact homology in $\mathbb{R}^3$}, preprint, arXiv:1811.10966.

\bibitem{ENS} J. Etnyre, L. Ng, J. Sabloff,
\textsl{Invariants of Legendrian knots and coherent orientations},  
J. Symplectic Geom. 1 (2002), no. 2, 321--367.

\bibitem{ENV}  J. Etnyre, L. Ng, V. V\'{e}rtesi, \textsl{Legendrian and transverse twist knots}, J. Eur. Math. Soc. (JEMS) 15 (2013), no. 3, 969--995.

\bibitem{FHT}  Y. F\'{e}lix, S. Halperin,  J-C. Thomas, \textsl{Rational homotopy theory}, Graduate Texts in Mathematics, 205, Springer-Verlag, New York, 2001.


\bibitem{GSW}  H. Gao, L. Shen, D. Weng, \textsl{Augmentations, Fillings, and Clusters}, preprint, arXiv:2008.10793.

\bibitem{Gui} S. Guillermou, \textsl{Quantization of conic Lagrangian submanifolds of cotangent bundles}, preprint, arXiv:1212.5818.

\bibitem{HSab} K. Hayden, J. Sabloff, \textsl{Positive knots and Lagrangian fillability}, Proc. Amer. Math. Soc. 143 (2015), no. 4, 1813--1821. 


\bibitem{HR}  M. Henry, D. Rutherford, \textsl{Ruling polynomials and augmentations over finite fields}, J. Topol. 8 (2015), no. 1, 1--37. 

\bibitem{Hughes}  J. Hughes, \textsl{Weave Realizability for D-type}, preprint, arXiv:2101.10306.

\bibitem{JT} X. Jin and D. Treumann, \textsl{Brane structures in microlocal sheaf theory}, preprint, arXiv:1704.04291

\bibitem{K} C. Karlsson, \textsl{A note on coherent orientations for exact Lagrangian cobordisms}, Quantum Topol. 11 (2020), no. 1, 1--54.


\bibitem{LeRu}  C. Leverson, D. Rutherford, \textsl{Satellite ruling polynomials, DGA representations, and the colored HOMFLY-PT polynomial,}  Quantum Topol. 11 (2020), no. 1, 55--118.


\bibitem{LSab} E. Lipman, J. Sabloff, \textsl{Lagrangian fillings of Legendrian 4-plat knots}, Geom. Dedicata 198 (2019), 35--55.


\bibitem{Mish} K. Mishachev, \textsl{The N-copy of a topologically trivial Legendrian knot},
J. Symplectic Geom. 1 (2003), no. 4, 659--682.


\bibitem{NadlerZ}  D. Nadler and E. Zaslow, \textsl{Constructible sheaves and the Fukaya category},
J. Amer. Math. Soc. 22 (2009), no. 1, 233--286.


\bibitem{Ng}   L. Ng, \textsl{Computable Legendrian invariants}, Topology 42 (2003), no. 1, 55--82.

\bibitem{NRSSZ} L. Ng, D. Rutherford, V. Shende, S. Sivek  E. Zaslow, \textsl{Augmentations are Sheaves}, 
 Geom. Topol. 24 (2020), no. 5, 2149--2286. 

\bibitem{Pan2}  Y. Pan, \textsl{Exact Lagrangian fillings of Legendrian (2,n) torus links}, Pacific J. Math. 289 (2017), no. 2, 417--441.

\bibitem{PanRu2} Y. Pan, D. Rutherford, \textsl{Augmentations and immersed Lagrangian fillings}, preprint, arXiv:2006.16436.

	
\bibitem{Sab}  J. Sabloff, \textsl{Duality for Legendrian contact homology}, Geom. Topol. 10 (2006), 2351--2381.


	
\bibitem{Sei} P. Seidel. \textsl{Fukaya categories and Picard-Lefschetz theory}, Zurich Lectures in Advanced Mathematics. European Mathematical Society (EMS), Zurich, 2008. 


\bibitem{STWZ} V. Shende, D. Treumann, H. Williams, E. Zaslow, 
\textsl{Cluster varieties from Legendrian knots}, 
Duke Math. J. 168 (2019), no. 15, 2801--2871.

\bibitem{STZ}
V. Shende, D. Treumann,  E. Zaslow, \textsl{Legendrian knots and constructible sheaves}, Invent. Math. 207 (2017), no. 3, 1031--1133. 



\end{thebibliography}
\end{document}